\documentclass[11pt]{amsart}
\usepackage[active]{srcltx}
\usepackage{calc,amssymb,amsthm,amsmath,amscd, ulem, mathtools, mathptmx}
\usepackage{alltt}
\usepackage[left=1.35in,top=1.25in,right=1.35in,bottom=1.25in]{geometry}
\RequirePackage[dvipsnames,usenames]{color}

\DeclareFontFamily{OMS}{rsfs}{\skewchar\font'60}
\DeclareFontShape{OMS}{rsfs}{m}{n}{<-5>rsfs5 <5-7>rsfs7 <7->rsfs10 }{}
\DeclareSymbolFont{rsfs}{OMS}{rsfs}{m}{n}
\DeclareSymbolFontAlphabet{\scr}{rsfs}

\usepackage[all,cmtip]{xy}

\newtheorem{theorem}{Theorem}[section]
\newtheorem{lemma}[theorem]{Lemma}
\newtheorem{proposition}[theorem]{Proposition}

\theoremstyle{definition}
\newtheorem{definition}[theorem]{Definition}
\newtheorem{example}[theorem]{Example}

\theoremstyle{remark}
\newtheorem{remark}[theorem]{Remark}

\numberwithin{equation}{subsection}

\DeclareMathOperator{\id}{id}

\DeclareMathOperator{\kk}{\Bbbk}
\DeclareMathOperator{\tv}{\tilde{\varphi}}
\DeclareMathOperator{\tp}{\tilde{\partial}}

\DeclareMathOperator{\FF}{\mathbb{F}}

\DeclareMathOperator{\ch}{char}

\DeclareMathOperator{\End}{End}

\title{Keller maps and holonomic $D$-modules}
\author{Wenliang Zhang}

\address{Department of Mathematics, Statistics, and Computer Science, University of Illinois at Chicago, 851 S. Morgan Street, Chicago, IL 60607-7045}
\email{wlzhang@uic.edu}

\begin{document}
\maketitle

\begin{abstract}
Let $\kk$ be a field of characteristic $p$ and $R=\kk[x_1,\dots,x_n]$. Let $D$ denote the ring of $\kk$-linear differential operators on $R$. Let $\varphi:R\to R$ be a $\kk$-algebra endomorphism whose Jacobian is a unit in $R$. We show that there is a unique $\kk$-algebra endomorphism $\Phi:D\to D$ such that $\Phi|_R=\varphi$ and that the restriction of scalar via $\Phi$ preserves holonomicity. We also construct an example of a $\kk$-algebra endomorphism of $D$ that does not preserve holonomicity.
\end{abstract}

\section{Introduction}
Let $\kk$ be a field of characteristic 0 and $R=\kk[x_1,\dots,x_n]$ be a polynomial ring in $n$ variables. Let $\varphi:R\to R$ be a $\kk$-algebra endomorphism. $\varphi$ is called {\it Keller} (or \emph{Jacobian} by some authors) if $\det(J_{\varphi})\in \kk^\times$ where $J_{\varphi}$ denotes the Jacobian matrix of $\varphi$. The Jacobian conjecture (\cite{Keller1939}) asserts that every Keller endomorphism on $R$ is an automorphism. This conjecture has been investigated from a viewpoint of $D$-modules ({\it cf.} \cite{Dixmier1968, BCW82, Bass89, BavulaDixmierConjecture, BKKpaper}), where $D$ stands for the ring of $\kk$-linear differential operators on $R$ (more details on $D$ can be found in \S\ref{sec: Keller}). Let $\tilde{\varphi}$ denote a $\kk$-algebra endomorphism on $D$ such that $\tv|_R=\varphi$ (the existence of $\tv$ is discussed in detail in \S\ref{sec: Keller}). Let $\prescript{\tv}{}R$ denote the $D$-module that agrees with $R$ as an abelian group with the (left) $D$-structure given by $\theta\cdot r:=\tv(\theta)r$ for each $\theta\in D$. If $\prescript{\tilde{\varphi}}{}R$ remains a simple $D$-module, then the Jacobian conjecture holds true (Remark \ref{connection between extension and jacobian}). The recent counterexample to the Jacobian conjecture (\cite{Alpoge2026}) implies that $\prescript{\tv}{}R$ may not be a simple $D$-module. From the $D$-module theoretic viewpoint, $\prescript{\tv}{}R$ being a finite-length $D$-module is the best one can hope for. Indeed, a remarkable result of Bavula (\cite[Theorem 1.3]{BavulaAnnals}) shows that if $M$ is a holonomic $D$-module then so is $\prescript{\tv}{}M$; in particular $\prescript{\tv}{}R$ has finite length as a $D$-module. (The notion of holonomic $D$-modules is discussed in detail in \S\ref{sec: holonomic}.)

Recently there have been considerable developments and interest in $D$-modules in prime characteristic $p$, including the notion of holonomic $D$-modules. It is natural to ask whether \cite[Theorem 1.3]{BavulaAnnals} can be extended to prime characteristic $p$, which is the motivation behind this article. The ring of differential operators behaves vastly differently in characteristic $p$ from characteristic 0: in characteristic 0, $D$ is has finitely many $\kk$-algebra generators $\{x_1,\dots,x_n,\partial_{x_1},\dots,\partial_{x_n}\}$ and consequently an extension of a Keller map in $\End_{\kk}(D)$ is determined by its image on these finitely many generators.However, in characteristic $p$, the ring $D$ is no longer finitely generated as a $\kk$-algebra. Set $\partial_i^{[t]}:=\frac{1}{t!}\frac{\partial^{t}}{\partial x^t_i}$. Then $\{\partial_i^{[p^e]}\mid e\geq 1\}$ is an algebraically independent set over $\kk$. Thus an extension of a Keller map in $\End_{\kk}(D)$ can no longer be determined by finitely many elements. Consequently new approaches are necessary for extending \cite[Theorem 1.3]{BavulaAnnals} to prime characteristic $p$.

Our first main result, proved in \S\ref{sec: Keller}, is the following:
\begin{theorem}[=Theorem \ref{keller admit unique ext in char p}]
\label{main thm 1}
Assume $\ch(\kk)=p>0$ and $R=\kk[x_1,\dots,x_n]$. Let $\varphi\in \End_{\kk}(R)$ be a Keller map. Then there is a unique $\Phi\in \End_{\kk}(D)$ such that $\Phi|_R=\varphi$.
\end{theorem}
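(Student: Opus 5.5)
Since $\det J_\varphi$ is a unit, the plan is to show that $\varphi$ is formally étale, and then to use the fact that in characteristic $p$ the ring $D$ is the union of endomorphism rings over Frobenius subrings.

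First, existence and uniqueness of derivations. A Keller map makes $\Omega_{R/\varphi(R)}=0$: the Jacobian criterion gives $d\varphi(x_1),\dots,d\varphi(x_n)$ as a basis of $\Omega_{R/\kk}$. Combined with finiteness considerations, this makes $R$ étale over $S:=\varphi(R)$ on the relevant locus.

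Second, use the description of $D$ in characteristic $p$. Write $R^{p^e}=\kk[x_1^{p^e},\dots,x_n^{p^e}]$, assuming $\kk$ perfect at first and reducing to the perfect case by base change and descent. Then
\[D=\bigcup_{e\ge0}D^{(e)},\qquad D^{(e)}=\End_{R^{p^e}}(R).\]
The key claim is that étaleness of $\varphi$ forces the relative Frobenius square to be cocartesian:
\[R^{p^e}\otimes_{\varphi(R)^{p^e}}\varphi(R)\;\xrightarrow{\ \sim\ }\;R.\]
This is the standard fact that for étale maps the relative Frobenius is an isomorphism. Concretely, $R$ is free over $R^{p^e}$ with basis the monomials $x^a$, $0\le a_i<p^e$. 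The claim says that $\varphi(x)^a$, for the same range of $a$, is also an $R^{p^e}$-basis of $R$. To check this, I would note that the transition matrix is invertible modulo the Jacobian: its determinant is a power of $\det J_\varphi$ times a unit, computed by filtering by degree in $dx$. Verifying this determinant identity is the main obstacle. I would first try induction on $e$, using the case $e=1$, where $R$ over $R^p$ is handled by the $p$-basis criterion: a set of elements is a $p$-basis iff their differentials form a basis of $\Omega$.

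Given the claim, define $\Phi$ on $D^{(e)}$ as follows. $\varphi$ carries the $R^{p^e}$-algebra $R$, with basis $x^a$, to the $\varphi(R)^{p^e}$-algebra $\varphi(R)$. Base changing along $\varphi(R)^{p^e}\to R^{p^e}$ gives an isomorphism
\[R^{p^e}\otimes_{\varphi(R)^{p^e}}\varphi(R)\cong R.\]
Hence
\[\End_{R^{p^e}}(R)\cong R^{p^e}\otimes\End_{\varphi(R)^{p^e}}(\varphi(R)).\]
Set $\Phi(\theta)=1\otimes\varphi\theta\varphi^{-1}$ for $\theta\in D^{(e)}$. Equivalently, $\Phi(\theta)$ is the unique $R^{p^e}$-linear operator satisfying $\Phi(\theta)(\varphi(r))=\varphi(\theta(r))$. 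These definitions are compatible as $e$ grows and restrict to $\varphi$ on $R$, so they give $\Phi\in\End_\kk(D)$.

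For uniqueness, let $\Psi$ be any extension. For $\theta\in D^{(e)}$ and $s\in R^{p^e}$ we have $[\theta,s]=0$, hence $[\Psi(\theta),\varphi(s)]=0$. So $\Psi(\theta)$ commutes with $\varphi(R)^{p^e}=\varphi(R^{p^e})$. Since $\varphi(x)^a$ is a basis, $\Psi(\theta)$ is then $\varphi(R)^{p^e}$-linear. An operator in $D$ commuting with $\varphi(x_i)^{p^e}$ lies in $\End_{R^{p^{e'}}}(R)$ for some $e'$, and the cocartesian square upgrades this to $R^{p^e}$-linearity after enlarging $e$ if needed. Finally, $\Psi(\theta)(\varphi(r))=\Psi(\theta)(\varphi(r)\cdot1)$. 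Using the $\varphi(R)^{p^e}$-linearity together with the relation $\Psi(\theta)\varphi(x^a)=\varphi(\theta x^a)+\ldots$ read in $D$, this determines $\Psi(\theta)$ on the basis. Hence $\Psi=\Phi$.
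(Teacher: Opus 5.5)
Your existence argument is a legitimate alternative to the paper's. Instead of quoting Masson's theorem for \'etale maps (Remark~\ref{Masson theorem}), you construct $\Phi$ on each $D^{(e)}=\End_{R^{p^e}}(R)$ by base change along the relative Frobenius. This only requires that the $\varphi(x^a)$, $0\le a_i<p^e$, form an $R^{p^e}$-basis of $R$, which is Nousiainen's theorem (the paper also quotes it). Your first uniqueness step is also sound, and more direct than the paper's: $\Psi(\theta)$ commutes with $R^{p^{e'}}$ for some $e'$ and with the $\varphi_j^{p^e}$, these together generate $R^{p^e}$, so $\Psi(D^{(e)})\subseteq D^{(e)}$.

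The gap is the last step, which is where all of the difficulty lies. An $R^{p^e}$-linear operator is determined by its values on the $\varphi(x^a)$. But applying $\Psi$ to the Leibniz identity $\partial_i^{[b]}x^a=\sum_{c\le b}\partial_i^{[c]}(x^a)\,\partial_i^{[b-c]}$ in $D$ and evaluating at $1$ only gives
\[\Psi(\partial_i^{[b]})\bigl(\varphi(x^a)\bigr)=\varphi\bigl(\partial_i^{[b]}(x^a)\bigr)+\sum_{c<b}\varphi\bigl(\partial_i^{[c]}(x^a)\bigr)\,\Psi(\partial_i^{[b-c]})(1).\]
So your ``$+\ldots$'' consists precisely of the unknown elements $\Psi(\partial_i^{[m]})(1)\in R$, $m\ge1$. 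The equality $\Psi=\Phi$ is equivalent to their vanishing, which you never establish.

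Nor can it be established from the relations you use. Take $n=1$, $\varphi=\id$, and the operator $\partial+f$ with $0\ne f\in R$. It is $R^p$-linear and satisfies $[\partial+f,x^a]=ax^{a-1}$, so it meets every constraint your argument imposes on $\Psi(\partial)$, yet $(\partial+f)(1)=f$. This is the characteristic-$0$ phenomenon of Example~\ref{infinitely many ext of Keller}. What excludes it in characteristic $p$ is a relation you never invoke, namely $(\partial_i^{[p^e]})^p=0$; here $(\partial+f)^p=f^p+\partial^{p-1}(f)\ne0$.

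The paper exploits exactly this. Write $\tv(\partial_i)=\Phi(\partial_i)+r_i$ with $r_i\in R$ (Lemma~\ref{keller acts as variable}). The relations $\tv(\partial_i)^p=0$ and $[\tv(\partial_i),\varphi_i]=1$ feed Lemma~\ref{decomposition thm}, giving $R=\bigoplus_a\varphi_1^{a_1}\cdots\varphi_n^{a_n}K$ with $K$ the joint kernel. Then $K=R_{(1)}z$ with $R=Rz$, so $z\in\kk^\times$, and $0=\tv(\partial_i)(z)=r_iz$ forces $r_i=0$. The same argument is repeated for $\partial_i^{[p^e]}$ by induction on $e$.

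Alternatively, you can finish inside your own framework using Skolem--Noether. $\Psi$ and $\Phi$ are $\varphi$-semilinear over the center $R^{p^e}$ of the matrix algebra $D^{(e)}$. Hence they induce $R^{p^e}$-algebra homomorphisms $R^{p^e}\otimes_{\varphi,R^{p^e}}D^{(e)}\to D^{(e)}$, which are necessarily isomorphisms. Since $\mathrm{Pic}(R^{p^e})=0$, they differ by an inner automorphism of $D^{(e)}$. That automorphism fixes $R=R^{p^e}[\varphi(R)]$ pointwise, so the conjugating unit centralizes $R$ and lies in $R^\times=\kk^\times$. This gives $\Psi=\Phi$ on $D^{(e)}$.
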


This is surprising and drastically different from characteristic 0. In characteristic 0, a Keller map may admit infinitely many extensions in $\End_{\kk}(D)$ (Example \ref{infinitely many ext of Keller}). Moreover, in characteristic $p$, if $\varphi$ is not Keller ({\it e.g.} the Frobenius endomorphism on $R$), then it may also admit infinitely many extensions in $\End_{\kk}(D)$ ({\it cf.} Example \ref{ex: infinite extension of Frob}).

To finish our extension of \cite[Theorem 1.3]{BavulaAnnals}, we prove in \S\ref{sec: holonomic} that:
\begin{theorem}[=Theorem \ref{thm: psi preserve holonomic}]
\label{main thm 2}
Assume $\ch(\kk)=p>0$ and $R=\kk[x_1,\dots,x_n]$. Let $\varphi\in \End_{\kk}(R)$ be a Keller map and let $\Phi$ be its unique extension in $\End_{\kk}(D)$. Then the restriction of scalar via $\Phi$ preserves holonomicity; that is, if $M$ is a holonomic $D$-module, then so is $\prescript{\Phi}{}M$.
\end{theorem}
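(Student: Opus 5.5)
The plan is to reduce holonomicity of $\prescript{\Phi}{}M$ to a linear-growth estimate: $\Phi$ should move a natural filtration of $D$ by at most a constant factor. I will use the Bernstein-type filtration available in characteristic $p$. Let $F_i\subseteq D$ be the $\kk$-span of the $x^\alpha\partial^{[\beta]}$ with $|\alpha|+|\beta|\le i$, where $\partial^{[\beta]}=\prod_j\partial_j^{[\beta_j]}$. I will take as the working characterization of holonomicity (the one from \S\ref{sec: holonomic}) that $M$ admits an exhaustive filtration $\{\Gamma_i\}$ by finite-dimensional subspaces with $F_i\Gamma_j\subseteq\Gamma_{i+j}$ and $\dim_\kk\Gamma_i\le Ci^n$. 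This is the form in which the notion is invariant under pulling back along filtration-bounded maps.

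\textbf{Step 1: an explicit description of $\Phi$.} Since $\det J_\varphi\in R^\times=\kk^\times$, the elements $y_j:=\varphi(x_j)$ are étale coordinates. In characteristic $p$ this gives $R=R^{p^e}[y_1,\dots,y_n]$ for every $e$, with the monomials $y^a$, $0\le a_j<p^e$, forming a basis of $R$ over $R^{p^e}$. Hence $D^{(e)}=\End_{R^{p^e}}(R)$ contains divided-power operators $\partial_{y}^{[a]}$ with respect to the $y$'s. I will show the following facts:
\begin{itemize}
\item the assignment $x_j\mapsto y_j$, $\partial^{[a]}_{x}\mapsto\partial^{[a]}_{y}$ defines a $\kk$-algebra endomorphism of $D$ extending $\varphi$;
\item by the uniqueness in Theorem \ref{keller admit unique ext in char p}, this endomorphism equals $\Phi$;
\item $\Phi(D^{(e)})\subseteq D^{(e)}$.
\end{itemize}

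\textbf{Step 2: a degree bound.} Let $d=\max_j\deg y_j$. The target estimate is $\Phi(F_i)\subseteq F_{ci}$ for a constant $c$ depending only on $d$ and $n$. For the $x$-part this is immediate, since $\Phi(x^\alpha)=y^\alpha$ has degree at most $d|\alpha|$. For the derivations, I expand $\partial_y^{[a]}=\sum_{|b|\le|a|}g_{a,b}\,\partial_x^{[b]}$ with $g_{a,b}\in R$. It then suffices to show $\deg g_{a,b}\le c'|a|$. The $g_{a,b}$ are determined by evaluating both sides on the monomials $x^b$, $|b|\le|a|$ (a triangular system). Alternatively they are given by a divided-power Faà di Bruno formula, built from the inverse Jacobian matrix and higher divided derivatives of the $y_j$. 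The inverse of $J_\varphi$ is $(\det J_\varphi)^{-1}\operatorname{adj}(J_\varphi)$, a polynomial matrix because the determinant is a constant. Each factor in such a formula therefore has degree bounded linearly in the number of derivatives involved, which gives the linear bound.

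\textbf{Step 3: conclusion.} Given a filtration $\Gamma$ of $M$ as above, set $\Gamma'_i:=\Gamma_{ci}$ on $\prescript{\Phi}{}M$. This is exhaustive, and $\dim\Gamma'_i\le Cc^n i^n$. It is also compatible, since $F_i\cdot\Gamma'_j=\Phi(F_i)\Gamma_{cj}\subseteq F_{ci}\Gamma_{cj}\subseteq\Gamma'_{i+j}$. Hence $\prescript{\Phi}{}M$ is holonomic. If the paper's definition also requires finite generation, I will deduce it from finite length, which holonomic modules have by the dimension bound.

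\textbf{Main obstacle.} I expect the hard part to be the uniform linear degree bound in Step 2, for arbitrarily large $p$-power divided derivatives $\partial_y^{[p^e]}$. These are not generated by lower-order operators, so no induction on algebra generators is available. My first attempt will be to write $\partial_y^{[a]}$ as the coefficient extraction of the Taylor map $f\mapsto f(y+t)$ expressed in $x$-coordinates. Writing $x=\psi_e(y)$ modulo $(t)^{p^e}$, with $\psi_e$ the formal inverse computed in $R^{p^e}$-linear terms, I will then bound degrees via Newton iteration. Newton iteration needs only the polynomial inverse Jacobian, so each step raises degree by at most a fixed amount.
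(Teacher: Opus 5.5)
Your architecture matches the paper's. You reduce to $\Phi(\scr{F}_i)\subseteq\scr{F}_{ci}$ via a bound, linear in $b$, on the degrees of the coefficients of $\Phi(\partial_i^{[b]})$ in the basis $\partial_x^{[\beta]}$. That bound comes from the Taylor (Hasse--Schmidt) map and the polynomiality of $J_\varphi^{-1}$. Your Step 3, pulling back the filtration as $\Gamma'_i=\Gamma_{ci}$, is correct and cleaner than the paper's detour through cyclicity and Lemma~\ref{filtration by generator}. In Step 1, replace $R^{p^e}$ by $\kk[R^{p^e}]$: for imperfect $\kk$ one has $R\neq R^{p^e}[y_1,\dots,y_n]$.

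\textbf{The gap is Step 2.} It is the entire content of the theorem, and you only assert it.
\begin{itemize}
\item The triangular system just moves the problem to bounding $\Phi(\partial_i^{[t]})(x_j)$.
\item The Fa\`a di Bruno route presupposes an integral divided-power formula with $O(|a|)$ factors per term, which you neither derive nor count.
\item The Newton heuristic, as stated, cannot be right. Newton steps double the $s$-adic precision, so a fixed additive degree increase per step would give $\deg\Phi(\partial_i^{[t]})(x_j)=O(\log t)$. The actual growth is genuinely linear.
\end{itemize}
For example, take the Keller map $\varphi=(x_1+x_1^px_2,\;x_2)$. The Taylor map for $\partial_2$ gives $h(x_1)=x_1+z_1$ with $z_1=-x_1^ps-(x_2+s)z_1^p$. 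Hence $\Phi(\partial_2^{[p^e]})(x_1)=-x_2\,\Phi(\partial_2^{[p^{e-1}]})(x_1)^p$, which has degree at least $p^{e+1}$.

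What closes the argument is an order-by-order recursion; this is exactly what the paper's $H$/$G$/$L$ splitting does. Let $h(f)=\sum_t\Phi(\partial_i^{[t]})(f)s^t$ and put $z_{j,t}:=\Phi(\partial_i^{[t]})(x_j)$. Compare coefficients of $s^t$ in $h(\varphi_k)=\varphi_k+\delta_{ik}s$, using the Hasse--Taylor expansion of $\varphi_k(x+z)$. This gives
\[J_\varphi\,(z_{1,t},\dots,z_{n,t})^{T}=\delta_{t1}e_i-\Bigl(\sum_{2\le|\beta|\le d}\partial_x^{[\beta]}(\varphi_k)\,c_{s^t}(z^\beta)\Bigr)_{k=1,\dots,n},\]
and the right side involves only $z_{\bullet,t'}$ with $t'<t$.

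The induction hypothesis must have the form $\deg z_{j,t}\le At-B$ with an intercept $B\ge n(d-1)$ and $A\ge(n-1)(d-1)+B$. Then $\deg c_{s^t}(z^\beta)\le At-|\beta|B$. Each step has a fixed cost of at most $(n-1)(d-1)+(d-2)$, coming from $J_\varphi^{-1}$ and $\partial_x^{[\beta]}\varphi_k$. The $|\beta|\ge2$ factors, each contributing $-B$, pay for it. With a hypothesis of the form $\deg\le At$ alone, the induction does not close; the paper's version is the bound $2ndb-n(d-1)$.

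With this in hand, every coefficient $c_{s^b}(z^\beta)$ of $\Phi(\partial_i^{[b]})$ has degree $\le Ab$. So $\Phi(\partial_i^{[b]})\in\scr{F}_{(A+1)b}$, and your Steps 1 and 3 finish the proof.
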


If $\varphi\in \End_{\kk}(R)$ is not Keller ({\it e.g.} the Frobenius endomorphism), then $\varphi$ may admit an extension in $\End_{\kk}(D)$ that does not preserve holonomicity. In \S\ref{counter example}, we construct such an example (Example \ref{example not preserve holo}). Hence in characteristic $p$:
\begin{enumerate}
\item if $\varphi\in \End_{\kk}(R)$ is Keller, then it admits a unique extension $\Phi\in \End_{\kk}(D)$ and the restriction of scalar via $\Phi$ preserves holonomicity, or
\item if $\varphi\in \End_{\kk}(R)$ is {\it not} Keller, then it may admit an extension $\tv\in \End_{\kk}(D)$ such that the restriction of scalar via $\tv$ does {\it not} preserve holonomicity.
\end{enumerate}

Nonetheless, combining Theorem \ref{main thm 1}, Theorem \ref{main thm 2} and \cite[Theorem 1.3]{BavulaAnnals} produces the following characteristic-free statement:
\begin{theorem}
Let $\kk$ be an arbitrary field and $R=\kk[x_1,\dots,x_n]$. Assume that $\varphi\in \End_{\kk}(R)$ is Keller and let $\tv$ be any extension of $\varphi$ in $\End_{\kk}(D)$. Then the restriction of scalar via $\tv$ preserves holonomicity; that is, if $M$ is a holonomic $D$-module, then so is $\prescript{\tv}{}M$. In particular, $\prescript{\tv}{}R$ has finite length in the category of $D$-modules.
\end{theorem}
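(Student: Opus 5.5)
The plan is to split according to the characteristic of $\kk$ and reduce each case to a result already stated above. The finite-length conclusion will then follow from holonomicity of $\prescript{\tv}{}R$.

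\emph{Characteristic $0$.} Here I would invoke \cite[Theorem 1.3]{BavulaAnnals} directly. Bavula's theorem applies to any $\kk$-algebra endomorphism of the Weyl algebra $D=A_n(\kk)$; an arbitrary extension $\tv$ of a Keller map is such an endomorphism. The only point to check is that the conventions agree: Bavula's notion of holonomic uses Gelfand--Kirillov dimension equal to $n$ for nonzero modules, and the Bernstein-filtration definition used in \S\ref{sec: holonomic} should coincide with it in characteristic $0$. No uniqueness of the extension is needed in this case, which matters because Example \ref{infinitely many ext of Keller} shows there may be infinitely many extensions.

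\emph{Characteristic $p>0$.} By Theorem \ref{main thm 1}, $\varphi$ has a unique extension $\Phi\in\End_{\kk}(D)$. Any extension $\tv$ of $\varphi$ must therefore equal $\Phi$. Theorem \ref{main thm 2} then says that $\prescript{\Phi}{}M=\prescript{\tv}{}M$ is holonomic whenever $M$ is. Uniqueness is exactly what lets the statement say ``any extension''.

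\emph{Finite length.} Take $M=R$. The module $R$ is holonomic in either characteristic: in characteristic $0$ this is standard (for instance, it is the simple module $D/\sum_i D\partial_i$), and in characteristic $p$ it should follow from the definitions in \S\ref{sec: holonomic}. Hence $\prescript{\tv}{}R$ is holonomic. Holonomic $D$-modules have finite length in both settings. In characteristic $0$ this follows from additivity of multiplicity. In characteristic $p$ I would cite the corresponding finite-length result that presumably accompanies the definition in \S\ref{sec: holonomic}; if it is not available there, I would derive it from an additive multiplicity bound.

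The main potential obstacle is this last point in characteristic $p$. Finite length of holonomic modules must be available there, and I would need to confirm that the paper's definition of holonomicity (in the sense of Bavula or Bernstein-type filtrations in characteristic $p$) guarantees it. Everything else is formal bookkeeping.
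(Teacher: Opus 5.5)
Your proposal is correct and is the paper's own argument: in characteristic $0$ you apply \cite[Theorem 1.3]{BavulaAnnals}, and in characteristic $p$ the uniqueness in Theorem \ref{main thm 1} forces $\tv=\Phi$, after which Theorem \ref{main thm 2} applies. The finite-length step you flagged is handled in the paper by citing \cite[Theorem 3.5]{LyubeznikHolonomic} (holonomic modules have finite length), and $R$ itself is holonomic via the degree filtration $R_i=\{f:\deg f\le i\}$, since $\scr{F}_iR_j\subseteq R_{i+j}$ and $\dim_{\kk}R_i=\binom{i+n}{n}$.
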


\section{Keller maps and their extensions}
\label{sec: Keller}
Let $R=\kk[x_1,\dots,x_n]$ where $\kk$ is a field. We will denote the ring of $\kk$-algebra endomorphisms on $R$ (respectively $D$) by $\End_{\kk}(R)$ (respectively $\End_{\kk}(D)$). Let $\varphi\in \End(R)$ be a $\kk$-algebra endomorphism of $R$. Set $\varphi_i=\varphi(x_i)$. Let $J_{\varphi}$ denote the Jacobian matrix $\begin{pmatrix} \frac{\partial \varphi_i}{\partial x_j} \end{pmatrix}$. A $\kk$-algebra endomorphism $\varphi:R\to R$ is called \emph{Keller} if $\det(J_\varphi)\in k^{\times}$.

We will denote by $D_{R/\kk}$ (or $D$ in short whenever $R$ and $\kk$ are clear from the context) the ring of $\kk$-linear differential operators on $R$ (in the sense of \cite[\S16]{EGA4.4}). More specifically, since $R$ is a polynomial ring over $\kk$, the ring $D$ can be described explicitly as follows:
\[D=R\langle \partial^{[t]}_i\mid t\geq 1,\ \ i=1,\dots,n\rangle\]
where $\partial^{[t]}_i:=\frac{1}{t!}\frac{\partial^t}{\partial x^t_i}:R\to R$ is the $\kk[x_1,\dots,x_{i-1},x_{i+1},\dots,x_n]$-linear map that sends $x^s_i$ to $\binom{s}{t}x^{s-t}_i$. 

It is clear that every Keller map on $R$ is \'{e}tale. We will recall  properties of \'{e}tale maps related to differential operators as follows:
\begin{remark}
\label{Masson theorem}
Let $f:A\to B$ be an \'{e}tale map of $\kk$-algebras of finite type. Let $D_{A/\kk}$ (respectively $D_{B/\kk}$) denote the ring of $\kk$-linear differential operators on $A$ (respectively on $B$). Then it follows from \cite[Theorem 2.2.5]{MassonThesis} that $f$ induces a unique $\kk$-algebra homomorphism $\tilde{f}:D_{A/\kk}\to D_{B/\kk}$ such that $\tilde{f}(\theta)$ is the unique element in $D_{B/\kk}$ that satisfies 
\[\tilde{f}(\theta)(f(a))=f(\theta(a))\quad\forall a\in A.\]
Put it differently, $f$ induces a unique $\kk$-algebra homomorphism $\tilde{f}:D_{A/\kk}\to D_{B/\kk}$ such that the following diagram commutes for every $\theta\in D_{A/\kk}$
\[
\xymatrix{
A \ar[r]^{\theta} \ar[d]^{f} & A \ar[d]^{f}\\
B\ar[r]^{\tilde{f}(\theta)} & B
}
\]
  
Moreover, the order of differential operators does not increase under this specific $\tilde{f}$; that is, if the order of $\theta$ is $\leq t$ then so is the order of $\tilde{f}(\theta)$.

Furthermore, if $f:A\to B$ is an \'{e}tale map of $\kk$-algebras of finite type, according to \cite[Theorem 2.2.10]{MassonThesis}, $f$ induces an isomorphism
\[B\otimes_AD_{A/\kk}\xrightarrow[\sim]{b\otimes \delta\mapsto b\tilde{f}(\delta)}D_{B/\kk}.\]
\end{remark}

Remark \ref{Masson theorem} prompts the following definition.
\begin{definition}
Let $R=\kk[x_1,\dots,x_n]$ where $\kk$ is a field and $\varphi:R\to R$ be an endomorphism. Set $D=D_{R/\kk}$.
\begin{enumerate}

\item $\tilde{\varphi}\in \End_{\kk}(D)$ is called an \emph{extension} of $\varphi$ if $\tilde{\varphi}(r)=\varphi(r)$ for every $r\in R$.

\item An extension $\tilde{\varphi}\in \End_{\kk}(D)$ of $\varphi$ is called \emph{functorial} if the following diagram commutes for every $\theta\in D$:
\[
\xymatrix{
R \ar[r]^{\theta} \ar[d]^{\varphi} & R \ar[d]^{\varphi}\\
R\ar[r]^{\tilde{\varphi}(\theta)} & R
}
\]
that is, 
\[\tilde{\varphi}(\theta)(\varphi(r))=\varphi(\theta(r))\quad\forall r\in R.\]
\item An extension $\tilde{\varphi}:D\in \End_{\kk}(D)$ of $\varphi\in \End_{\kk}(R)$ is called \emph{strict} if it does not increase the order of differential operators; that is, if $\theta\in D$ has order $\leq t$, then $\tilde{\varphi}(\theta)$ has order $\leq t$ for every $\theta\in D$.
\end{enumerate}
\end{definition}

Since Keller maps are \'{e}tale, the following is an immediate consequence of Remark \ref{Masson theorem}:
\begin{proposition}
\label{Keller admit functorial}
Let $R=\kk[x_1,\dots,x_n]$ where $\kk$ is an arbitrary field. Let $\varphi:R\to R$ be a Keller map.
\begin{enumerate}
\item $\varphi$ admits a unique functorial extension $\Phi$ in $\End_{\kk}(D)$.
\item This unique functorial extension $\Phi$ is strict.
\item 
\label{Keller induce basis of D}
$\{\Phi(\partial^{[b_1]}_1)\cdots \Phi(\partial^{[b_n]}_n))\mid b_1,\dots,b_n\geq 0\}$ is an $R$-basis of $D$; that is, 
\[D= \bigoplus_{b_1,\dots,b_n\geq 0} R \Phi(\partial^{[b_1]}_1)\cdots \Phi(\partial^{[b_n]}_n).\]
\end{enumerate}
\end{proposition}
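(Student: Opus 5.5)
The plan is to apply Remark~\ref{Masson theorem} with $A=B=R$ and $f=\varphi$, after checking that a Keller map is \'etale. For that, note that $\Omega_{R/\kk}$ is free on $dx_1,\dots,dx_n$. The relative differentials satisfy $\Omega_{R/\varphi(R)}=\Omega_{R/\kk}/(d\varphi_1,\dots,d\varphi_n)$, and $d\varphi_i=\sum_j \frac{\partial\varphi_i}{\partial x_j}dx_j$. Since $\det J_\varphi$ is a unit, the $d\varphi_i$ form a basis, so $\varphi$ is unramified. Flatness follows from the Jacobian criterion: $R\cong R_{\varphi}[x_1,\dots,x_n]/(\varphi_i(x)-y_i)$ is a standard smooth presentation over the source copy of $R$ with invertible Jacobian, so $\varphi$ is standard \'etale.

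For (1), Remark~\ref{Masson theorem} gives a $\kk$-algebra map $\Phi:D\to D$ with $\Phi(\theta)(\varphi(a))=\varphi(\theta(a))$ for all $a\in R$. We must check $\Phi|_R=\varphi$. For $r\in R$, viewed as multiplication, $\varphi(r)$ is an operator satisfying $\varphi(r)\varphi(a)=\varphi(ra)$. By the uniqueness in Remark~\ref{Masson theorem}, $\Phi(r)=\varphi(r)$. So $\Phi$ is an extension, and it is functorial by construction. Uniqueness among functorial extensions: any functorial extension $\tilde\varphi$ satisfies the defining identity, so $\tilde\varphi(\theta)=\Phi(\theta)$ by Masson's uniqueness of the element of $D$ with that property. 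Equivalently, an operator in $D_{R/\kk}$ is determined by its values on $\varphi(R)$ when $\varphi$ is \'etale; this is exactly what Masson's theorem encodes.

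Part (2) is immediate from the order statement in Remark~\ref{Masson theorem}.

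For (3), use the isomorphism $R\otimes_{R,\varphi}D\to D$, $b\otimes\delta\mapsto b\Phi(\delta)$, from Remark~\ref{Masson theorem}. The source $D$ is a free left $R$-module on the monomials $\partial_1^{[b_1]}\cdots\partial_n^{[b_n]}$. Therefore $R\otimes_{R,\varphi}D$ is a free $R$-module on $1\otimes \partial_1^{[b_1]}\cdots\partial_n^{[b_n]}$. Their images are $\Phi(\partial_1^{[b_1]}\cdots\partial_n^{[b_n]})=\Phi(\partial_1^{[b_1]})\cdots\Phi(\partial_n^{[b_n]})$, since $\Phi$ is multiplicative. The only point needing care is the tensor structure: the tensor uses the left $R$-module structure of $D$ via $\varphi$, which matches $\Phi(r\delta)=\varphi(r)\Phi(\delta)$. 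The map is therefore well defined and $R$-linear, and the basis transfers.

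The main obstacle is verifying that a Keller map is \'etale, in particular flat. If the standard-\'etale argument above looks shaky, the fallback is that unramified plus $R$ regular of the same dimension on both sides gives flatness by miracle flatness. This works because quasi-finiteness at every point and regularity of source and target imply the map is flat.
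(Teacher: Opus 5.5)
Your proposal is correct and takes the same route as the paper, which simply notes that Keller maps are \'etale and reads all three parts off Masson's theorem (Remark~\ref{Masson theorem}). Your checks that $\Phi|_R=\varphi$, that functoriality forces uniqueness, and that the basis transfers through $R\otimes_{R,\varphi}D\cong D$ make explicit what the paper leaves implicit; your main \'etaleness argument via the presentation $\kk[y_1,\dots,y_n][x_1,\dots,x_n]/(\varphi_i(x)-y_i)$ with invertible Jacobian (standard smooth of relative dimension $0$) already suffices, so the miracle-flatness fallback is unnecessary, and as justified it would additionally need the local equality $\dim R_{\mathfrak q}=\dim R_{\varphi^{-1}(\mathfrak q)}$.
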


\begin{remark}
\label{connection between extension and jacobian}
Assume $\ch(\kk)=0$. The functorial extensions of Keller maps are closely related to the Jacobian conjecture. Let $\varphi:R\to R$ be a Keller map. Let $\tv$ be the functorial extension of $\varphi$ to $\End_{\kk}(D)$. We claim that, if $\prescript{\tv}{}R$ is a simple $D$-module, then $\varphi$ must be an automorphism (hence the Jacobian conjecture holds true). To see this, we consider the image of $\varphi$. Since $\tv(\theta)(r)=\varphi(\theta(r))$ by functoriality, the image $\varphi(R)$ is naturally a $D$-submodule of $\prescript{\tv}{}R$. If $\prescript{\tv}{}R$ is a simple $D$-module, then $\varphi(R)=R$. Since $R$ is noetherian, surjectivity of $\varphi$ forces injectivity; this shows that $\varphi$ is an automorphism.

However, the recent counterexample to the Jacobian conjecture implies that $\prescript{\tv}{}R$ may not be a simple $D$-module. Hence, in terms of $D$-modules, the finite length property of $\prescript{\tv}{}R$ is the best one can hope for. 
\end{remark}

Functorial extensions may not be strict as shown in the following example.
\begin{example}
Let $R=\FF_p[x]$. Let $F:R\to R$ denote the Frobenius endomorphism. Define $\tilde{F}:D\to D$ via $\tilde{F}(r)=F(r)=r^p$ for $r\in R$ and $\tilde{F}(\partial^{[t]})=\partial^{[pt]}$ for $t\geq 1$. Then one can check that $\tilde{F}$ is a functorial extension, but not a strict extension as it increases the order of differential operators.
\end{example}

Strict extensions may not be functorial as shown in the following example.
\begin{example}
\label{infinitely many ext of Keller}
Assume $\ch(\kk)=0$ and let $R=\kk[x]$. Consider identity map $\id_R$ on $R$. For each $f\in R$, define $\phi_f:D\to D$ via $\phi_f(r)=r$ and $\phi_f(\partial)=\partial+f(x)$. Then one can check that:
\begin{enumerate}
\item Each $\phi_f$ is a strict extension of $\id_R$ in $\End_{\kk}(D)$ (hence $\id_R$ admits infinitely many extensions in $\End_{\kk}(D)$), and that
\item $\phi_f$ is functorial if and only if $f=0$. The unique functorial extension of $\id_R$ in $\End_{\kk}(D)$ is indeed $\id_D$. 
\end{enumerate}
\end{example}

\begin{remark}[Nousiainen's Theorem]
Assume now that $\ch(k)=p>0$. Let $\varphi$ be a Keller map. Then it follows from Nousiainen's Theorem (\cite{NousiainenThesis}) that $\{\varphi_1\dots,\varphi_n\}$ is a $p$-basis for $R$ over $R_{(1)}:=k[R^p]=k[x^p_1,\dots,x^p_n]$; that is, 
\[R= \bigoplus_{0\leq a_1,\dots,a_n\leq p-1}R_{(1)}\varphi^{a_1}_a\cdots \varphi^{a_n}_n.\]
If we set $R_{(e)}:=k[R^{p^e}]=k[x^{p^e}_1,\dots,x^{p^e}_n]$ for each $e\geq 1$, then 
\[R_{(e)}= \bigoplus_{0\leq a_1,\dots,a_n\leq p-1}R_{(e+1)}\varphi^{a_1p^e}_a\cdots \varphi^{a_np^e}_n\ {\rm and}\ R= \bigoplus_{0\leq a_1,\dots,a_n\leq p^e-1}R_{(e)}\varphi^{a_1}_a\cdots \varphi^{a_n}_n.\]
\end{remark}

Let $\varphi$ be a $\kk$-algebra endomorphism on $R$ that admits an extension $\tilde{\varphi}\in \End_{\kk}(D)$. Since $\tilde{\varphi}$ is a ring map, the following hold:
\begin{equation}
\begin{aligned}
\label{action of d on phi}
[\tilde{\varphi_i}(\partial_i), \varphi_j] & =\tilde{\varphi}([x_i,\partial_j])=\tilde{\varphi}(\delta_{ij})=\delta_{ij} \\
[\tilde{\varphi_i}(\partial^{[p^e]}_i), \varphi^{p^e}_j] & =\delta_{ij} \\
\tilde{\varphi}(\partial^{[t]}_i)(\varphi^{m}_i) & =\begin{cases}\binom{m}{t}\varphi^{m-t}_i & m\geq t \\0 & m<t\end{cases}  \\
[\tilde{\varphi}(\partial^{[t]}_i), \tilde{\varphi}(\partial^{[t]}_j)] & =0 
\end{aligned}
\end{equation}

\begin{lemma}
\label{keller acts as variable}
Let $R=\kk[x_1,\dots,x_n]$ where $\kk$ is an arbitrary field. Let $\varphi\in \End_{\kk}(R)$ be a Keller map and set $\varphi_i:=\varphi(x_i)$. Then
\[R=\{\theta\in D| [\theta, \varphi_i]=0,\ {\rm for}\ i=1,\dots,n\}.\]
\end{lemma}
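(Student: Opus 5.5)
The inclusion $R\subseteq\{\theta\in D\mid [\theta,\varphi_i]=0\}$ is clear because $R$ is commutative, so the work is the reverse inclusion. I will use the $R$-basis from Proposition \ref{Keller admit functorial}(\ref{Keller induce basis of D}). Write $\Phi$ for the unique functorial, strict extension of $\varphi$, and write
\[\theta=\sum_{b} r_b\,\Phi(\partial^{[b_1]}_1)\cdots\Phi(\partial^{[b_n]}_n),\qquad b=(b_1,\dots,b_n),\ r_b\in R,\]
with only finitely many $r_b\neq 0$. Set $\Phi^{[b]}:=\Phi(\partial^{[b_1]}_1)\cdots\Phi(\partial^{[b_n]}_n)$. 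The goal is to show that $[\theta,\varphi_i]=0$ for all $i$ forces $r_b=0$ for every $b\neq 0$.

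The key computation is the commutator of a basis element with $\varphi_i$. In $D$ we have the identity $[\partial^{[b_1]}_1\cdots\partial^{[b_n]}_n, x_i]=\partial^{[b_1]}_1\cdots\partial^{[b_i-1]}_i\cdots\partial^{[b_n]}_n$, which follows from $\partial^{[t]}_i x_i = x_i\partial^{[t]}_i+\partial^{[t-1]}_i$. The divided-power operators commute with one another and with $x_j$ for $j\neq i$. Since $\Phi$ is a ring map with $\Phi(x_i)=\varphi_i$, applying $\Phi$ gives
\[[\Phi^{[b]},\varphi_i]=\Phi^{[b-e_i]},\]
where this term is $0$ if $b_i=0$. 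Because $r_b\in R$ commutes with $\varphi_i$, it follows that
\[[\theta,\varphi_i]=\sum_{b,\ b_i\geq1} r_b\,\Phi^{[b-e_i]}.\]
The elements $\Phi^{[b-e_i]}$ are distinct basis elements for distinct $b$ with $b_i\ge1$. Linear independence over $R$ in Proposition \ref{Keller admit functorial}(\ref{Keller induce basis of D}) therefore gives $r_b=0$ whenever $b_i\geq 1$. Running over $i=1,\dots,n$ kills every $r_b$ with $b\neq0$, so $\theta=r_0\Phi^{[0]}=r_0\in R$.

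The step I expect to need the most care is the commutator identity in $D$ in characteristic $p$. This is where the divided powers matter, because $[\partial_i^{p^e},x_i]$ would vanish whereas $[\partial_i^{[p^e]},x_i]=\partial_i^{[p^e-1]}\neq0$. I would check it on monomials via $\binom{s+1}{t}=\binom{s}{t}+\binom{s}{t-1}$. The remaining points, that the $\Phi(\partial^{[b_j]}_j)$ commute and that the basis statement is exactly as quoted, are supplied by $\Phi$ being a ring homomorphism and by the earlier proposition, so I can take them as given.
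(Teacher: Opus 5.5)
Your proof is correct and follows the same route as the paper. You expand $\theta$ in the $R$-basis $\{\Phi(\partial^{[b_1]}_1)\cdots\Phi(\partial^{[b_n]}_n)\}$ of Proposition~\ref{Keller admit functorial}, then use $[\theta,\varphi_i]=0$ and $R$-linear independence to kill every coefficient with $b_i\geq 1$. Your explicit identity $[\Phi^{[b]},\varphi_i]=\Phi^{[b-e_i]}$ comes from applying $\Phi$ to $[\partial^{[t]}_i,x_i]=\partial^{[t-1]}_i$; it fills in the detail the paper leaves implicit when it cites only $[\Phi(\partial_i),\varphi_j]=\delta_{ij}$, and it is the right identity to use in characteristic $p$.
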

\begin{proof}
Let $\Phi$ denote the unique functorial extension of $\varphi$ (Proposition \ref{Keller admit functorial}). By Proposition \ref{Keller admit functorial}(\ref{Keller induce basis of D}), $\theta$ can be written as 
\[\theta=\sum_{b_1,\dots,b_n\geq 0} r_{b_1,\dots,b_n}\Phi(\partial^{[b_1]}_1)\cdots \Phi(\partial^{[b_n]}_n).\]
The commutator $[\theta,\varphi_1]=0$ together with $[\Phi(\partial_i),\varphi_j]=\delta_{ij}$ implies that $r_{b_1,\dots,b_n}=0$ whenever $b_1\geq 1$. Continuing this process, one can see that the only potentially nonzero coefficient is $r_{0,\dots,0}$ and hence $\theta\in R$. This shows one inclusion $\supseteq$. The other inclusion $\subseteq$ is clear.
\end{proof}

\begin{remark}
\label{diff oper in char p}
Assume that $\ch(\kk)=p>0$ and $R=\kk[x_1,\dots,x_n]$. Then, for each $i$, the divided power differential operators $\{\partial^{[p^e]}_i\mid e\geq 0\}$ is algebraically independent. On the other hand, if $m=\sum_{i=0}^t m_ip^i$ with $0\leq m_i\leq p-1$, then. one can check
\[\partial^{[m]}_i=\prod_{i=0}^t (\partial^{[p^i]})^{m_i}.\]
Consequently, if $m$ is not a power of $p$, then $\partial^{[m]}_i$ is determined by differential operators of lower orders.
\end{remark}

The following lemma is stated in the form that is most convenient to this article.

\begin{lemma}
\label{decomposition thm}
Let $A=k[y_1,\dots,y_n]$ and let M be an $A$-module. Set $A_{(1)}:=\kk[y^p_1,\dots,y^p_n]$. Assume that there is an $R_{(1)}$-module morphism $\phi:M\to M$ and an element $a\in A$ such that $\phi^p=0$ and $\phi\cdot a-a\cdot \phi=\id_M$, then 
\[M = \oplus_{i=0}^{p-1}a^i\ker(\phi)\]
in the category of $A_{(1)}$-modules.
\end{lemma}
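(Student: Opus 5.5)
We want $M=\bigoplus_{i=0}^{p-1}a^i\ker(\phi)$ as $A_{(1)}$-modules. (We read ``$R_{(1)}$-module morphism'' as $A_{(1)}$-linear.) The plan mimics the classical structure theory of a pair $(\phi,a)$ with $[\phi,a]=\id$: it is a representation of the truncated Weyl relation, and $\phi$ behaves like a ``lowering operator'' with $a$ as ``raising operator''. Since multiplication by $a\in A$ commutes with the $A_{(1)}$-action on the $A$-module $M$, and $\phi$ is $A_{(1)}$-linear, every operator built from $a$ and $\phi$ is $A_{(1)}$-linear. Hence $\ker\phi$, each $a^i\ker\phi$, and every projection we construct are $A_{(1)}$-submodules or $A_{(1)}$-maps. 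So it suffices to prove the decomposition as abelian groups.

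First I will record the commutation formula $\phi a^i - a^i\phi = i a^{i-1}$, which follows by induction from $\phi a - a\phi=\id$. Consequently, for $m\in\ker\phi$ we get $\phi(a^i m)=i a^{i-1}m$. Iterating gives $\phi^j(a^i m)=\frac{i!}{(i-j)!}a^{i-j}m$ for $j\le i$, and $\phi^j(a^im)=0$ for $j>i$. For $0\le i\le p-1$ the coefficient $i!$ is a unit in $\kk$.

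\emph{Directness.} Suppose $\sum_{i=0}^{p-1}a^i m_i=0$ with $m_i\in\ker\phi$, and let $i_0$ be the largest index with $a^{i_0}m_{i_0}\ne0$. Apply $\phi^{i_0}$ to the relation. This kills every term with $i<i_0$ and leaves $i_0!\,m_{i_0}=0$. Since $i_0!$ is invertible, $m_{i_0}=0$, a contradiction. Note that this argument shows the summand $a^{i}m_i$ itself vanishes, not merely that $m_i=0$, which is exactly what directness requires.

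\emph{Spanning.} This is the main step. I will induct on the nilpotency index $N(m)=\min\{j:\phi^j m=0\}\le p$. If $N(m)\le1$, then $m\in\ker\phi$. Otherwise set $j=N(m)-1\le p-1$, so that $n:=\phi^j m\in\ker\phi$. By the formula above, $\phi^j(a^j n)=j!\,n$. Put $m'=m-\frac{1}{j!}a^j n$. Then $\phi^j m'=0$, so $N(m')\le j<N(m)$, and by induction $m'\in\sum_i a^i\ker\phi$. Hence so is $m$, since $j\le p-1$. This is where $\phi^p=0$ is essential: it guarantees $j\le p-1$, so that $j!$ is invertible.

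The only delicate point I expect is exactly this invertibility of $j!$ and the range $j\le p-1$, which $\phi^p=0$ provides. Everything else is formal.
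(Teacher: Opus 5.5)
Your proposal is correct and is essentially the paper's proof. The spanning step, subtracting $\frac{1}{j!}a^j\phi^j(m)$ to lower the nilpotency index (with $\phi^p=0$ guaranteeing $j\le p-1$), is exactly the paper's induction on $M_j=\ker(\phi^{j+1})$, and directness is obtained as in the paper by applying a power of $\phi$ to isolate the top summand; the only differences are cosmetic: you apply $\phi^{i_0}$ at the largest nonzero index rather than $\phi^{p-1}$ followed by descent, and you make explicit the $A_{(1)}$-linearity that the paper leaves implicit.
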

\begin{proof}
Set $M_j:=\ker(\phi^{j+1})$ for $j\geq 0$. Note that $M=M_{p-1}$ by the hypothesis. First we show that $M=\sum_{i=0}^{p-1}a^i(\ker(\phi))$. Let $z$ be an element of $M$ and we wish to show that $z$ can be written as $z=\sum_{i=0}^{p-1}a^iz_i$ for some $z_i\in \ker(\phi)$. Since $M=M_{p-1}$, there is $0\leq j\leq p-1$ such that $z\in M_j$. We will induct on $j$. If $j=0$ (that is, $z\in M_0=\ker(\phi)$), then we can set $z_0=z$ and $z_i=0$ for $i\geq 1$. Assume that $j\geq 1$ and that we have proved the decomposition for nonnegative integers $<j$. Set $z':=\frac{1}{j!}\phi^j(z)$ (since $j<p$, $j!\neq 0$). It follows that $z'\in \ker(\phi)$ since $\phi(z')=\phi^{j+1}(z)=0$. And, applying $\phi\cdot a-a\cdot \phi=\id_M$ and $\phi(z')=0$, one can check:
\[\phi^j(a^jz')=j!z'=\phi^j(z).\]
Consequently $\phi^j(z-a^jz')=\phi^j(z)-\phi^j(a^jz')=0$ which implies that $z-a^jz'\in M_{j-1}$. By induction, $z-a^jz'=\sum_{i=0}^{p-1}a^iz_i$ with $z_i\in \ker(\phi)$ which provides the desired decomposition of $z$. This finishes the proof of $M=\sum_{i=0}^{p-1}a^i\ker(\phi)$.

To finish the proof of the desired direct sum decomposition of $M$, we will prove that if $\sum_{i=0}^{p-1}a^iz_i=0$ with $z_i\in \ker(\phi)$ then $z_0=\cdots=z_{p-1}=0$. Note that (by applying $\phi\cdot a-a\cdot \phi=\id_M$ and $\phi(z_i)=0$)
\[\phi^{p-1}(a^iz_i)=\begin{cases} 0 & i<p-1\\ (p-1)!z_{p-1} & i=p-1\end{cases}.\] 
Hence applying $\phi^{p-1}$ to $\sum_{i=0}^{p-1}a^iz_i=0$ produces $(p-1)!z_{p-1}=0$ and hence $z_{p-1}=0$. One can then follows this process to conclude $z_0=\cdots=z_{p-1}=0$. This finishes the proof of our theorem.
\end{proof}

We are now in position to prove our first main result. As a reminder, one key feature of $\partial^{[t]}_i$ in characteristic $p$ is that $(\partial^{[t]}_i)^p=0$ for each $i=1,\dots,n$ and each $t\geq 1$.

\begin{theorem}
\label{keller admit unique ext in char p}
Let $R=\kk[x_1,\dots,x_n]$. Assume $\ch(\kk)=p>0$ and $\varphi:R\to R$ is a Keller map. Then $\varphi$ admits a unique extension in $\End_{\kk}(D)$. In particular, the functorial extension $\Phi$ is the unique extension of $\varphi$ in $\End_{\kk}(D)$.
\end{theorem}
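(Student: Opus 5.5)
Existence is Proposition \ref{Keller admit functorial}, so the task is uniqueness. Let $\tv\in\End_{\kk}(D)$ be any extension of $\varphi$. I will show $\tv(\partial_i^{[t]})=\Phi(\partial_i^{[t]})$ for all $i$ and $t\geq 1$. Since $D$ is generated as a ring by $R$ and these operators, this gives $\tv=\Phi$. By Remark \ref{diff oper in char p}, $\partial_i^{[t]}$ is a product of the $\partial_i^{[p^e]}$, so it suffices to treat $t=p^e$. I will induct on $e$, proving the statement simultaneously for all $i$.

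\textbf{Step 1: the element $\delta:=\tv(\partial_i^{[p^e]})-\Phi(\partial_i^{[p^e]})$ commutes with all $\varphi_j^{p^e}$.} Both $\tv$ and $\Phi$ are ring maps extending $\varphi$. Hence $[\tv(\partial_i^{[p^e]}),\varphi_j^{p^e}]$ and $[\Phi(\partial_i^{[p^e]}),\varphi_j^{p^e}]$ both equal the image of $[\partial_i^{[p^e]},x_j^{p^e}]$. By Leibniz, this commutator is a polynomial in $x$ and the lower $\partial_j^{[p^{e'}]}$ with $e'<e$, and these images agree by the induction hypothesis. Likewise, for $s<p^e$, the commutator $[\partial_i^{[p^e]},x_j^{s}]$ involves only operators of order $<p^e$. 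Therefore $\delta$ commutes with $\varphi_j^{s}$ for all $s$ that are multiples of $p^e$. It also commutes with the images under $\tv=\Phi$ of all lower-order operators, since $\partial^{[p^e]}_i$ commutes with all $\partial^{[m]}_j$.

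\textbf{Step 2: $\delta$ lies in the ring generated by the lower operators' images.} Let $D^{(e)}$ be the subring generated by $R$ and the $\Phi(\partial_j^{[m]})$ with $m<p^e$. By Proposition \ref{Keller admit functorial}(\ref{Keller induce basis of D}) and Nousiainen's theorem, $D^{(e)}$ should be the ring of operators linear over $R_{(e)}$. The elements commuting with $D^{(e)}$ should then be those acting $\End_{R_{(e)}}(R)$-centrally, i.e. the operators that are $R$-linear relative to the decomposition $R=\bigoplus R_{(e)}\varphi^{a}$. Concretely, I expect $\delta$, having trivial commutators with $R_{(e)}$ (which is generated over $\kk$ by the $\varphi_j^{p^e}$, via the Nousiainen decomposition) and with $D^{(e)}$, to be forced to equal $\Phi(\theta)$ for some $\theta$ in the analogous centralizer for the identity map. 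For $\varphi=\id$, one checks directly on the basis that such $\theta$ must be $0$ in degree $p^e$ components. The base case $e=0$ is exactly Lemma \ref{keller acts as variable}: $\delta$ commutes with every $\varphi_j$, so $\delta\in R$.

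\textbf{Step 3: kill the remaining ambiguity.} From Step 2, $\delta$ should reduce to something like an element of $R$, or of $R_{(e)}$-type central elements. I will then use nilpotence: $(\tv(\partial_i^{[p^e]}))^p=0$ and $(\Phi(\partial_i^{[p^e]}))^p=0$. Apply Lemma \ref{decomposition thm} with $M=R$ viewed over $A_{(1)}$-type subrings, with $\phi=\tv(\partial_i^{[p^e]})$ and $a=\varphi_i^{p^e}$, so that $\phi a-a\phi=\id$. This gives $R=\bigoplus_k \varphi_i^{kp^e}\ker\phi$. Then $\ker\tv(\partial_i^{[p^e]})$ and $\ker\Phi(\partial_i^{[p^e]})$ are both determined as the span, over $R_{(e+1)}$ and the relevant $\varphi$-monomials, of those elements killed by the lower operators. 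Consequently both operators agree on a generating set and hence everywhere. In the base case, this says that $\partial_i+r$ with $r\in R$ satisfies $(\partial_i+r)^p=0$ on the kernel and forces $r=0$.

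The main obstacle is Step 2: identifying the centralizer in $D$ of $R_{(e)}$ together with the lower-order images, to show that $\delta$ is a ``constant'' that Step 3 can then eliminate. I would attack it by expanding $\delta$ in the $\Phi$-basis of Proposition \ref{Keller admit functorial}(\ref{Keller induce basis of D}) and taking commutators with $\varphi_j^{p^{e'}}$, $e'\le e$, mimicking the proof of Lemma \ref{keller acts as variable}.
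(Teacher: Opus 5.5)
\textbf{There is a genuine gap: your Step 3, where the real content of the proof lies, is asserted rather than proved. Your Step 2 is unnecessary and also unproved.}

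\emph{Step 2 is not needed.} Your Step 1 already contains what it requires. Since $[\partial_i^{[p^e]},x_j]=\delta_{ij}\partial_i^{[p^e-1]}$, and $\tv$, $\Phi$ agree on $\partial_i^{[p^e-1]}$ by the induction hypothesis and Remark \ref{diff oper in char p}, $\delta$ commutes with each $\varphi_j$ itself, not merely with $\varphi_j^{p^e}$. So Lemma \ref{keller acts as variable} gives $\delta=a_i\in R$ at every stage of the induction, exactly as in the base case. This is what the paper does. The centralizer of $R_{(e)}$ together with the lower images never has to be computed, so the speculative ``should be'' and ``I expect'' argument can be dropped.

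You will need one refinement later. Both $\tv(\partial_i^{[p^e]})$ and $\Phi(\partial_i^{[p^e]})$ commute with every $\Phi(\partial_j^{[t]})$ for $t<p^e$, so $a_i$ does too. Evaluating at $1$ shows these operators kill $a_i$. Functoriality and the Nousiainen basis then give $a_i\in R_{(e)}$. Hence $\tv(\partial_i^{[p^e]})$ restricts to an $R_{(e+1)}$-linear operator on $R_{(e)}$.

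\emph{The gap is in Step 3.} Lemma \ref{decomposition thm} with $\phi=\tv(\partial_i^{[p^e]})$ and $a=\varphi_i^{p^e}$ only yields $R=\bigoplus_k\varphi_i^{kp^e}\ker\phi$. Nothing in it identifies $\ker\phi$ with $\ker\Phi(\partial_i^{[p^e]})$. Your sentence saying both kernels are ``determined by elements killed by the lower operators'' restates the conclusion. Similarly, in the base case, the claim that nilpotence of $\Phi(\partial_i)+r_i$ forces $r_i=0$ is given without argument. A degree count works for $\varphi=\id$, but not for a general Keller map.

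The missing idea is global, and it runs as follows.
\begin{enumerate}
\item Iterate the lemma over all $i$, using $[\tv(\partial_i^{[p^e]}),\tv(\partial_j^{[p^e]})]=0$. This gives $R_{(e)}=\bigoplus_{0\le a_j\le p-1}\varphi_1^{a_1p^e}\cdots\varphi_n^{a_np^e}K$, where $K$ is the \emph{joint} kernel of all the $\tv(\partial_j^{[p^e]})$.
\item Then $K$ is a rank-one projective $R_{(e+1)}$-module, hence free, say $K=R_{(e+1)}z$.
\item Nousiainen's $p$-basis then gives $R_{(e)}=R_{(e)}z$, so $z\in\kk^\times$.
\item Finally, $0=\tv(\partial_i^{[p^e]})(z)=\Phi(\partial_i^{[p^e]})(z)+a_iz=a_iz$, which forces $a_i=0$.
\end{enumerate}

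It is the joint kernel, not the individual kernels you work with, that has rank one and a constant generator to evaluate on. Without this rank-one/unit argument, or some substitute for it, your proposal does not establish uniqueness.
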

\begin{proof}
Let $\tilde{\varphi}:D\to D$ be an endomorphism such that $\tilde{\varphi}|_R=\varphi$. It suffices to show that
\[\tilde{\varphi}(\partial^{[p^e]}_i)=\Phi(\partial_i^{[p^e]}),\ \forall e\geq 0,\ i=1,\dots,n.\]
To this end, we will induct on $e$.

We start with the case when $e=0$; that is, we wish to show that $\tilde{\varphi}(\partial_i)=\Phi(\partial_i)$ for $i=1,\dots,n$. It follows from (\ref{action of d on phi}) that $[(\Phi-\tilde{\varphi})(\partial_i),\varphi_j]=0$ for $i,j=1,\dots,n$. It follows from Lemma \ref{keller acts as variable} that $(\Phi-\tilde{\varphi})(\partial_i)\in R$. Set $r_i=(\Phi-\tilde{\varphi})(\partial_i)\in R$. That is $\tv(\partial_i)=\Phi(\partial_i)+r_i$. In this notation, $r_i$ stands for multiplication by $r_i$ on R. Since the order of $\Phi(\partial_i)$ is at most the order of $\partial_i$, it follows that $\Phi(\partial_i)$ commutes with elements in $R_{(1)}$. Hence $\Phi(\partial_i)+r_i$ commutes with elements in $R_{(1)}$, and so does $\tv(\partial)$. Hence $\tv(\partial_i)\in \End_{R_{(1)}}(R)$ for each $i=1,\dots,n$. In particular, $\ker(\tv(\partial_1))$ is an $R_{(1)}$-submodule of $R$. Since $\partial^p_1=0$, we have $\tv(\partial_1)^p=0$. Since $x\partial_1x_1-x_1\partial_1=1$, we have $\tv(\partial_1)\varphi_1-\varphi_1\tv(\partial_1)=1$. That is, $\tv(\partial_1)$ and $\varphi_1$ satisfy the assumptions in Lemma \ref{decomposition thm}. It now follows from Lemma \ref{decomposition thm} that 
\[R= \bigoplus_{0\leq a_1\leq p-1}\varphi_1^{a_1}\ker(\tv(\partial_1)).\]
Since $\tv(\partial_i)$ commutes with $\tv(\partial_j)$ for all $i,j$, it follows that $\tv(\partial_2)\in \End_{R_1}(\ker(\tv(\partial_1)))$. Applying Lemma \ref{decomposition thm} to the case when $M=\ker(\tv(\partial_1))$ and $\phi=\tv(\partial_2)$ produces
\[R= \bigoplus_{0\leq a_1,a_2\leq p-1}\varphi_1^{a_1}\varphi^{a_2}_2\left(\ker(\tv(\partial_1))\cap \ker(\tv(\partial_2))\right).\]
Continuing this process, we have
\[R= \bigoplus_{0\leq a_1,\dots,a_n\leq p-1}\varphi_1^{a_1}\cdots \varphi^{a_n}_n\left(\ker(\tv(\partial_1))\cap\cdots \cap \ker(\tv(\partial_n))\right).\]
Set $K:=\ker(\tv(\partial_1))\cap\cdots \cap \ker(\tv(\partial_n))$. Then $K$ is a direct summand of $R$ as an $R_{(1)}$-module. Hence $K$ is a projective $R_{(1)}$-module. By counting the number of summands, $K$ has rank 1 over $R_{(1)}$. Since $R_{(1)}$ is a polynomial ring in the variables $x^p_1,\dots,x^p_n$, it follows that $K\cong R_{(1)}$. Let $z$ be a generator of $K$, then $R_{(1)}\cong K=R_{(1)}z$. Consequently

\begin{align*}
R&= \bigoplus_{0\leq a_1,\dots,a_n\leq p-1}\varphi_1^{a_1}\cdots \varphi^{a_n}_nK\\
&=\bigoplus_{0\leq a_1,\dots,a_n\leq p-1}\varphi_1^{a_1}\cdots \varphi^{a_n}_nR_{(1)}z\\
&=(\bigoplus_{0\leq a_1,\dots,a_n\leq p-1}\varphi_1^{a_1}\cdots \varphi^{a_n}_nR_{(1)})z\\
&= Rz
\end{align*}

where the last isomorphism holds since $R= \bigoplus_{0\leq a_1,\dots,a_n\leq p-1}\varphi_1^{a_1}\cdots \varphi^{a_n}_nR_{(1)}$. That is, we have proved $R=Rz$. It follows that $z\in R^\times=k^\times$. Since both $\tv(\partial_i)$ and $\Phi(\partial_i)$ are $k$-linear differential operators,
\[0=\tv(\partial_i)(z)=(\Phi(\partial_i)+r_i)(z)=0+r_iz\]
which implies that $r_i=0$ for each $i=1,\dots,n$. This shows that $\tv(\partial_i)=\Phi(\partial_i)$ for each $i=1,\dots,n$. This completes the proof of the base case when $e=0$ for our induction.

Assume that we have proved $\tv(\partial^{[p^0]}_i)=\Phi(\partial^{[p^0]}_i),\dots, \tv(\partial^{[p^e]}_i)=\Phi(\partial^{[p^e]}_i)$ for each $i=1,\dots,n$. We wish to show that $ \tv(\partial^{[p^{e+1}]}_i)=\Phi(\partial^{[p^{e+1}]}_i)$ for each $i=1,\dots,n$.

One immediate consequence of our hypothesis $\tv(\partial^{[p^0]}_i)=\Phi(\partial^{[p^0]}_i),\dots, \tv(\partial^{[p^e]}_i)=\Phi(\partial^{[p^e]}_i)$ and Remark \ref{diff oper in char p} is that
\[\tv(\partial^{[t]}_i)=\Phi(\partial^{[t]}_i)\quad {\rm for}\ 0\leq t\leq p^{e+1}-1.\]
Consider $\Phi(\partial^{[p^{e+1}]}_i)-\tv(\partial^{[p^{e+1}]}_i)$. Since $[\partial^{[p^{e+1}]}_i, x_j]=0$ for $i\neq j$ and $[\partial^{[p^{e+1}]}_i, x_i]=\partial^{[p^{e+1}-1]}_i$, we have $[\tv(\partial^{[p^{e+1}]}_i), \varphi_j]=0=[\partial^{[p^{e+1}]}_i, x_j]$ for $i\neq j$ and 
\[[\tv(\partial^{[p^{e+1}]}_i), \varphi_i]=\tv(\partial^{[p^{e+1}-1]}_i)=\Phi(\partial^{[p^{e+1}-1]}_i)=[\Phi(\partial^{[p^{e+1}]}_i), \varphi_i].\]
Consequently $[(\Phi(\partial^{[p^{e+1}]}_i)-\tv(\partial^{[p^{e+1}]}_i)), \varphi_j]=0$ for all $i,j=1,\dots,n$. Repeating the previous argument, we have $\tv(\partial^{[p^{e+1}]}_i)-\Phi(\partial^{[p^{e+1}]}_i)=a_i\in R$ for each $i=1,\dots,n$. Also, by the previous argument in the case when the underlying ring is $R_{(e+1)}$, we have that 
\[R_{(e+1)}= \bigoplus_{0\leq a_1,\dots,a_n\leq p-1}\varphi_1^{a_1p^{e+1}}\cdots \varphi^{a_np^{e+1}}_n\left(\ker(\tv(\partial^{[p^{e+1}]}_1))\cap\cdots \cap \ker(\tv(\partial^{[p^{e+1}]}_n))\right).\]
Set $K:=\ker(\tv(\partial^{[p^{e+1}]}_1))\cap\cdots \cap \ker(\tv(\partial^{[p^{e+1}]}_n))$. Then it follows that $K$ is a rank-1 projective $R_{(e+2)}$-module and hence $R_{(e+2)}\cong K=R_{(e+2)}g$ for a generator of $g$ of $K$. Then one can follow the same argument as before to show that $g\in k^\times$ and that $a_i=0$ for each $i=1,\dots,n$. This shows that $\Phi(\partial^{[p^{e+1}]}_i)=\tv(\partial^{[p^{e+1}]}_i)$ for each $i=1,\dots,n$ and completes the proof of our theorem.
\end{proof}

If $\varphi$ is not Keller, then its extensions in $\End_{\kk}(D)$ may no longer be unique. This phenomenon is already noted in \cite{BavulaExtensionFrobenius} for the Frobenius endomorphism\footnote{We thank Vladimir V. Bavula for pointing this out to us.}.

We consider the following specific example due to its relevance to Example \ref{example not preserve holo}.
\begin{example}
\label{ex: infinite extension of Frob}

Consider the case when $R=\FF_p[x]$ and $\varphi=F$ is the Frobenius endomorphism. It admits a natural extension $\tilde{F}$ defined via
\[\tilde{F}(x)=x^p\quad \tilde{F}(\partial^{[t]})=\partial^{[pt]}\]
for all $t\geq 1$. We will modify $\tilde{F}$ to produce infinitely many different extensions of $F$ in $\End_{\FF_p}(D)$. 
Consider the projection 
\[\pi_0: R=\oplus_{j=0}^{p-1}x^j\FF_p[x^p]\to \FF_p[x^p]\] 
such that $\pi_0(x^m)=\begin{cases} x^m& p|m\\ 0 & {\rm otherwise}\end{cases}$. It turns out that $\pi_0$ agrees with $\sum_{i=0}^{p-1}(-1)^ix^i\partial^{[i]}\in D$. Note that $\pi_0x\pi_0(r)=0$ for every $r\in R$ and hence $\pi_0x\pi_0=0\in D$. Set $\pi=x\pi_0$. Then $\pi^2=0\in D$.

For each $e\geq 1$, set 
\[a_e:=p^{e+1}+p^{4e+2}\quad {\rm and} \quad w_e:=x^{a_e}\pi.\] 
Then one can check that $w^2_e=0$ in $D$ since $x^{a_e}\pi=\pi x^{a_e}$ and $\pi^2=0$. Now define $\alpha_e:D\to D$ by 
\[\alpha_e(\theta)=(1+w_e)\theta(1-w_e)\quad \forall\ \theta\in D.\] 
One can check $\alpha_e$ is a $\FF_p$-algebra endomorphism on $D$ since $(1+w_e)(1-w_e)=1$. Finally we define
\[\tilde{F}_e:=\alpha_e\circ \tilde{F}\]
Then one can check that $\tilde{F}_e(r)=F(r)=r^p$ for each $r\in R$ and that 
\begin{equation}
\label{alpha on small t}
\tilde{F}_e(\partial^{[t]})=\alpha_e(\partial^{[pt]})=\begin{cases}\partial^{[pt]}& t<p^e\\ \partial^{[p^{e+1}]} - x^{p^{4e+2}}\pi& t=p^e\end{cases}
\end{equation}
which shows that $\tilde{F}_e$ are different for different $e$.
\end{example}

\section{Holonomic $D$-modules}
\label{sec: holonomic}
\subsection{Holonomic $D$-modules.} We follow closely the characteristic-free approach to holonomic $D$-modules developed in \cite{LyubeznikHolonomic} which was inspired by the notion of holonomicity introduced in \cite{BavulaHolonomic}. 

Let $\kk$ be a field and $R=\kk[x_1,\dots,x_n]$. Let $D$ denote the ring of $\kk$-linear differential operators on $R$. Recall that: 
\[D=R\langle \partial^{[t]}_i\mid t\geq 1\ i=1,\dots,n\rangle\]
where $\partial^{[t]}_i:=\frac{1}{t!}\frac{\partial^t}{\partial x^t_i}:R\to R$ is the $\kk[x_1,\dots,x_{i-1},x_{i+1},\dots,x_n]$-linear map that sends $x^s_i$ to $\binom{s}{t}x^{s-t}_i$. For each integer $i\geq 0$, set $\scr{F}_i$ to be the $\kk$-span of the set $\{x^{a_1}_1\cdots x^{a_n}_n\partial^{[b_1]}_1\cdots \partial^{[b_n]}_n\mid a_1+\cdots+a_n+b_1+\cdots+b_n\leq i\}$ with the convention $\scr{F}_0=\kk$. Then $\{\scr{F}_i\}$ is an ascending chain of finite dimensional $\kk$-subspaces of $D$ such that $\bigcup_i\scr{F}_i=D$ and that $\scr{F}_i\scr{F}_j\subseteq \scr{F}_{i+j}$. This filtration $\{\scr{F}_i\}$ is called the \emph{canonical filtration}\footnote{It is called the {\it Bernstein filtration} in \cite{LyubeznikHolonomic}} of $D$ in \cite{BavulaHolonomic}.

In this article, we will focus on left $D$-modules. 

\begin{definition}
\label{defn: holonomic}
Let $M$ be a $D$-module. A \emph{$\kk$-filtration} on $M$ is an ascending chain of finite dimensional $\kk$-subspaces of $M$: $M_0\subseteq M_1\subseteq \cdots$ such that $\bigcup_iM_i=M$ and $\scr{F}_iM_j\subseteq M_{i+j}$.

A $D$-module $M$ is called \emph{holonomic} if it admits a $\kk$-filtration $\{M_i\}$ and a constant $c$ such that $\dim_{\kk}(M_i)\leq ci^n$ for all $i\geq 0$ (as a reminder, $n=\dim(R)$ and $c$ is independent of $i$).
\end{definition}

\begin{remark}
The notion of holonomic $D$-modules in \cite[p.198]{BavulaHolonomic} is introduced for finitely generated $D$-modules in prime characteristic $p$. In this article, we follow the characteristic-free approach in \cite[Definition 3.4]{LyubeznikHolonomic} which does assume $M$ to be finitely generated. Even though $M$ is not assumed to be a finitely generated $D$-module in Definition \ref{defn: holonomic} (or \cite[Definition 3.4]{LyubeznikHolonomic}), it is proved in \cite[Theorem 3.5]{LyubeznikHolonomic} that a holonomic $D$-module have finite length in the category of $D$-modules ({\it cf.} \cite[Theorem 9.6]{BavulaHolonomic}), a fortiori, finitely generated.
\end{remark}

\begin{remark}
\label{holonomic is cyclic}
If a $D$-module $M$ has finite length, then it is cyclic. In particular, every holonomic $D$-module is cyclic. 

This is well-known in characteristic 0 and proved in \cite[Theorem 9.7]{BavulaHolonomic} in prime characteristic $p$. 
\end{remark}

The next result (Lemma \ref{filtration by generator}) is developed in \cite{LZ26}, a collaboration with Gennady Lyubeznik. We thank him for allowing it to be included here.

\begin{lemma}
\label{filtration by generator}
Let $M$ be a cyclic $D$-module with a generator $z$. Set $\tilde{M}_i:=\scr{F}_i\cdot z$ for each $i\geq 0$. If $M$ is holonomic, then there is a constant $C$ such that $\dim_{\kk}(\tilde{M}_i)\leq Ci^n$ for all $i\leq 0$.
\end{lemma}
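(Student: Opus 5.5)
Since $M$ is holonomic, fix a $\kk$-filtration $\{M_i\}$ of $M$ and a constant $c$ with $\dim_{\kk}(M_i)\leq ci^n$ for all $i$; this is Definition \ref{defn: holonomic}. The plan is to compare the filtration $\tilde{M}_i=\scr{F}_i\cdot z$ with $\{M_i\}$, up to a fixed shift of the index. Because $\bigcup_j M_j=M$ and $z\in M$, there is an integer $j_0\geq 0$ with $z\in M_{j_0}$. The compatibility condition $\scr{F}_iM_{j}\subseteq M_{i+j}$ then gives
\[\tilde{M}_i=\scr{F}_i\cdot z\subseteq \scr{F}_iM_{j_0}\subseteq M_{i+j_0}\]
for every $i\geq 0$. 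Consequently $\dim_{\kk}(\tilde{M}_i)\leq \dim_{\kk}(M_{i+j_0})\leq c(i+j_0)^n$.

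The remaining step is to absorb the shift $j_0$ into the constant. For $i\geq 1$ we have $i+j_0\leq (1+j_0)i$, so $c(i+j_0)^n\leq c(1+j_0)^n i^n$, and $C:=c(1+j_0)^n$ works for all $i\geq 1$.

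The case $i=0$ needs separate attention, since $Ci^n=0$ there (for $n\geq 1$) while $\tilde{M}_0=\kk z$ is nonzero whenever $z\neq 0$. I would handle it by reading the statement, as is standard for such growth bounds, as applying to $i\geq 1$ (the ``$i\leq 0$'' in the statement is evidently a typo for $i\geq 0$, i.e.\ all relevant $i$). Alternatively one can use the bound $C\max(i,1)^n$, taking $C\geq 1$ so that $i=0$ is also covered. The same convention must already implicitly hold for $\{M_i\}$ in Definition \ref{defn: holonomic}, so nothing is lost.

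There is no real obstacle here. The only point requiring care is that the $\tilde{M}_i$ are finite dimensional and exhaust $M$. Finite dimensionality holds because each $\scr{F}_i$ is finite dimensional. Exhaustion holds because $\bigcup_i\scr{F}_i=D$ and $z$ generates $M$. Moreover, $\scr{F}_i\tilde{M}_j\subseteq\tilde{M}_{i+j}$ since $\scr{F}_i\scr{F}_j\subseteq\scr{F}_{i+j}$. So $\{\tilde{M}_i\}$ is itself a $\kk$-filtration witnessing holonomicity, although only the dimension bound was asked for.
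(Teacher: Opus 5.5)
Your proposal is correct and matches the paper's proof exactly: choose $j_0$ with $z\in M_{j_0}$, deduce $\tilde{M}_i\subseteq M_{i+j_0}$ from $\scr{F}_iM_{j_0}\subseteq M_{i+j_0}$, and absorb the shift via $c(i+j_0)^n\leq c(1+j_0)^n i^n$. Your observation about $i=0$ is also right: the paper's final inequality is asserted "for all $i\geq 0$" but only holds for $i\geq 1$ when $z\neq 0$, and restricting to $i\geq 1$ loses nothing for how the lemma is used.
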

\begin{proof}  
As $M$ is holonomic, it admits a $\kk$-filtration $M_0\subset M_1\subset\dots$ such that $\dim_{\kk}M_i\leq ci^n$ for a constant c and for all $i$. Let $t$ be an integer such that $z\in M_t$ (such an $t$ exists as $\{M_i\}$ is exhaustive). Then $\tilde{M}_i\subset M_{i+t}$ for all $i$, hence $\dim_{\kk}\tilde{M}_i\leq {\rm dim}_kM_{i+t}\leq c(i+t)^n\leq c(1+t)^ni^n$ for all $i\geq 0$.
\end{proof}

\begin{definition}
Let $M$ be a $D$-module and let $\phi\in \End_{\kk}(D)$ be a $\kk$-algebra endomorphism of $D$. Then we define $\prescript{\phi}{}M$ to be the $D$-module whose underlying abelian is the same as $M$ with $D$-structure defined via:
\[\theta\cdot m:=\phi(\theta)\cdot m\]
for all $\theta\in D$ and $m\in M$.
\end{definition}

The following is an immediate consequence of Remark \ref{holonomic is cyclic} and Lemma \ref{filtration by generator}.
\begin{proposition}
\label{holonomic criterion}
Let $\phi\in \End_{\kk}(D)$ be a endomorphism of $D$. Assume there is a constant $C$ such that $\phi(\scr{F}_i)\subseteq \scr{F}_{Ci}$ for all $i\geq 0$, then the restriction of scalar via $\phi$ preserves holonomicity; that is: if $M$ is holonomic. then $\prescript{\phi}{}M$ is also holonomic.
\end{proposition}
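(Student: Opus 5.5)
The plan is to build an explicit filtration on $\prescript{\phi}{}M$ from a cyclic generator of $M$ and bound its dimensions with Lemma \ref{filtration by generator}. Let $M$ be a holonomic $D$-module. By Remark \ref{holonomic is cyclic}, $M$ is cyclic, so we fix a generator $z\in M$. By Lemma \ref{filtration by generator}, there is a constant $C'$ with $\dim_{\kk}(\scr{F}_i\cdot z)\leq C'i^n$ for all $i\geq 1$. The natural candidate filtration on $\prescript{\phi}{}M$ is
\[N_i:=\scr{F}_{Ci}\cdot z\subseteq M,\]
where the action on the right is the original action of $D$ on $M$, and $C$ is the constant in the hypothesis. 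We may assume $C\geq 1$ is an integer.

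First we check that $\{N_i\}$ is a $\kk$-filtration of $\prescript{\phi}{}M$. Each $N_i$ is finite dimensional because $\scr{F}_{Ci}$ is. The chain is ascending because $\scr{F}_{Ci}\subseteq\scr{F}_{C(i+1)}$. For compatibility, take $\theta\in\scr{F}_i$. In $\prescript{\phi}{}M$ it acts by $\phi(\theta)\in\scr{F}_{Ci}$, so
\[\theta\cdot N_j=\phi(\theta)\scr{F}_{Cj}z\subseteq \scr{F}_{Ci}\scr{F}_{Cj}z\subseteq\scr{F}_{C(i+j)}z=N_{i+j}.\]
For the dimension bound, $\dim_{\kk}N_i\leq C'(Ci)^n=C'C^n i^n$ for $i\geq 1$. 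At $i=0$ the bound $ci^n$ reads $\dim N_0\leq 0$; this is the usual harmless convention issue, and can be fixed by shifting indices or by enlarging the constant to handle small $i$.

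The main obstacle is exhaustiveness. We have $\bigcup_i N_i=Dz=M$ as sets. However, $\prescript{\phi}{}M$ might not be generated by $z$ over $D$ via $\phi$, and the definition of a $\kk$-filtration only requires exhaustion of the underlying space, which we have. So exhaustiveness actually holds.

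It remains to check that $\phi$ is well-defined as an action. The module axioms for $\prescript{\phi}{}M$ hold since $\phi$ is a $\kk$-algebra endomorphism, so $\phi(1)=1$ and the action is associative. With the filtration exhaustive, compatible with $\scr{F}_\bullet$, and of dimension $O(i^n)$, it follows that $\prescript{\phi}{}M$ is holonomic by Definition \ref{defn: holonomic}.
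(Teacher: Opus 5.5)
Your argument is correct and follows the paper's own route: the paper treats this proposition as an immediate consequence of Remark~\ref{holonomic is cyclic} and Lemma~\ref{filtration by generator}. That is, one filters $\prescript{\phi}{}M$ by $\scr{F}_{Ci}\cdot z$ for a generator $z$, then uses $\phi(\scr{F}_i)\scr{F}_{Cj}\subseteq\scr{F}_{C(i+j)}$ and $\dim_{\kk}(\scr{F}_{Ci}\cdot z)\leq C'C^ni^n$. Of your two fixes for the $i=0$ convention, only the index shift works, since enlarging the constant cannot help when $c\cdot 0^n=0$; this is a cosmetic point that the paper's Lemma~\ref{filtration by generator} shares.
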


\subsection{Hasse-Schmidt derivations.} We recall the notion of Hasse-Schmidt derivations and we follow \cite[\S27]{Matsumura} closely. Let $\kk$ be a field and $A,B$ be $\kk$-algebras. A \emph{Hasse-Schmidt derivation of length $m$ from $A$ to $B$} is a sequence $\{d_0,d_1,\dots,d_m\}$ of $\kk$-linear maps $A\to B$ such that
\begin{enumerate}
\item $d_0:A\to B$ is a $\kk$-algebra morphism; and
\item for $1\leq i\leq m$ and all $x,y\in A$,
\[d_i(xy)=\sum_{\substack{a,b\geq 0\\a+b=i}} d_a(x)d_b(y).\]
\end{enumerate}
Equivalently, a sequence $\{d_0,d_1,\dots,d_m\}$ of $\kk$-linear maps $A\to B$ is a Hasse-Schmidt derivation if the map $h_m:A\to B_m:=B[[y]]/(y^{m+1})$ defined by $h_m(a)=\sum_{i=0}^md_i(a)y^i$ is a $\kk$-algebra morphism and $d_0:A\to B$ is a $\kk$-algebra morphism. If $\{d_0,d_1,\dots\}$ is a sequence of $\kk$-linear maps $A\to B$ such that $\{d_0,d_1,\dots,d_m\}$ is a Hasse-Schmidt derivation of length $m$ for every positive integer $m$, then the map $h:A\to B[[y]]$ defined by $h(a)=\sum_{i=0}^{\infty} d_i(a)y^i$ is a ring morphism.

If $A=B$, then $\{d_0,d_1,\dots,d_m\}$ is called a Hasse-Schmidt derivation of length $m$ on $A$.

\begin{remark}
\label{image of Keller is HS}
Let $\kk$ be a field and $R=\kk[x_1,\dots,x_n]$. Fix $1\leq j\leq n$ and set $d_i:=\partial^{[i]}_j$ for $i\geq 1$. Then one can check the sequence $\{\id_R, d_1,\dots, d_m\}$ is a Hasse-Schmidt derivation of length $m$ on $R$ for each positive integer $m$. Let $\varphi\in \End_{\kk}(R)$ be a Keller map. Since $\varphi$ is \'{e}tale, it follows from \cite[Theorem 27.2]{Matsumura} that $\{\id_R, \partial_j,\partial_j^{[2]},\dots, \partial_j^{[m]}\}$ admits a unique extension $\{\id_R, d'_1,\dots, d'_m\}$ on $R$ such that $\{\id_R, d'_1,\dots, d'_m\}$ is a Hasse-Schmidt derivation of length $m$ on $R$ and $d'_i(\varphi(r))=\varphi(d_i(r))$. If we denote the unique extension of $\varphi$ in $\End_{\kk}(D)$ by $\Phi$, then $d'_i=\Phi(d_i)$ (by uniqueness). That is, for each $j$, the sequence 
\[\{\id_R, \Phi(\partial_j),\Phi(\partial_j^{[2]}),\dots,\Phi(\partial^{[m]}_j)\}\] 
is a Hasse-Schmidt derivation of length $m$ on R. In particular, $\Phi(\partial^{[m]}_j)$ satisfies ($m\geq 1$)
\[\Phi(\partial^{[m]}_j)(r_1r_2)=\sum_{\substack{a,b\geq 0\\a+b=m}} \Phi(\partial^{[a]}_j)(r_1)\Phi(\partial^{[b]}_j)(r_2)\]
for all $r_1,r_2\in R$. Consequently $h:R\to R[[y]]$ defined via
\[h(r)=\sum_{t=0}^{\infty} \Phi(\partial^{[t]}_j)(r)y^t\]
is a ring morphism for each $j=1,\dots,n$. 
\end{remark}

\begin{theorem}
\label{thm: psi preserve holonomic}
Let $R=\kk[x_1,\dots,x_n]$ with $\ch(\kk)=p>0$ and let $\varphi\in \End(R)$ be a Keller map. Let $\Phi$ denote its unique extension to $\End(D)$. If $M$ is a holonomic $D$-module, then so is $\prescript{\Phi}{}M$.

In particular, $\prescript{\Phi}{}R$ is holonomic and hence has finite length in the category of $D$-modules.
\end{theorem}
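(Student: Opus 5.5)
By Proposition \ref{holonomic criterion}, it is enough to find a constant $C$ with $\Phi(\scr{F}_i)\subseteq \scr{F}_{Ci}$ for all $i\geq 0$. Since $\scr{F}_i\scr{F}_j\subseteq\scr{F}_{i+j}$ and $\Phi$ is multiplicative, $\Phi(x^{a}\partial^{[b_1]}_1\cdots\partial^{[b_n]}_n)=\varphi(x)^{a}\Phi(\partial_1^{[b_1]})\cdots\Phi(\partial_n^{[b_n]})$. Let $d=\max_i\deg\varphi_i$. Then $\varphi(x)^a\in\scr{F}_{d|a|}$. So the whole task reduces to one \emph{linear} bound:
\[\Phi(\partial^{[t]}_j)\in\scr{F}_{Ct}\qquad\text{for all } t\geq 1,\ j=1,\dots,n,\]
with $C$ independent of $t$. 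Given this, a monomial of total degree $\leq i$ maps into $\scr{F}_{\max(d,C)i}$. We cannot reduce to the generators $\partial_j^{[p^e]}$ via Remark \ref{diff oper in char p}, because that would add the bounds for each $p$-adic digit. So I would bound $\Phi(\partial_j^{[t]})$ directly for every $t$.

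To do this I would write $\Phi(\partial_j^{[t]})$ in the standard basis, $\Phi(\partial_j^{[t]})=\sum_{|b|\leq t} c_b\,\partial_1^{[b_1]}\cdots\partial_n^{[b_n]}$ with $c_b\in R$; only $|b|\leq t$ occurs because $\Phi$ is strict (Proposition \ref{Keller admit functorial}). Then it suffices to show $\deg c_b\leq C't$. The coefficients are recovered by the usual inversion formula: $c_b$ is an explicit $\mathbb{Z}$-combination of $x^{b'}\cdot\Phi(\partial_j^{[t]})(x^{b-b'})$, since $\theta(x^{b})=\sum_{b'\leq b}\binom{b}{b'}c_{b'}x^{b-b'}$ lets us solve for the $c_b$ by triangular induction. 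Hence it suffices to bound $\deg\Phi(\partial_j^{[t]})(x^{c})$ for $|c|\leq t$.

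For this I would use Remark \ref{image of Keller is HS}. The map $h(r)=\sum_s\Phi(\partial_j^{[s]})(r)y^s$ is a ring homomorphism $R\to R[[y]]$, so it is determined by the power series $h(x_k)=\sum_s D_s(x_k)y^s$, where $D_s=\Phi(\partial_j^{[s]})$. Then $\Phi(\partial_j^{[t]})(x^c)$ is the $y^t$-coefficient of $\prod_k h(x_k)^{c_k}$. So the remaining point is to show $\deg D_s(x_k)\leq Ls$ for a constant $L$.

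To obtain this bound, I would use functoriality in the form $h(\varphi_i)=\varphi_i+\delta_{ij}y$. Writing $h(x_k)=x_k+\sum_{s\geq1}u_{k,s}y^s$ and expanding $h(\varphi_i)=\varphi_i(h(x_1),\dots,h(x_n))$ by Taylor expansion with divided-power derivatives, the $y^s$-coefficient gives
\[J_\varphi\cdot u_{\bullet,s}=(\text{polynomial in } u_{\bullet,s'},\ s'<s,\ \text{with coefficients among the }\partial^{[\alpha]}\varphi_i).\]
Since $\det J_\varphi\in\kk^\times$, the matrix $J_\varphi^{-1}$ has polynomial entries of degree $\leq e$ for some fixed $e$. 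An induction on $s$ then shows that $\deg u_{k,s}$ is bounded by a linear function of $s$. Concretely, a term of the right-hand side is a product of $u_{k_1,s_1}\cdots u_{k_m,s_m}$ with $\sum s_l=s$, $m\geq2$, times a coefficient of degree $\leq d$. So the bound $\deg u_{k,s}\leq Ls-L_0$ propagates, provided $L_0\geq d+e$: the right-hand side has degree $\leq d+e+Ls-mL_0\leq Ls-L_0$.

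The main obstacle, I expect, is exactly this linear-growth induction: choosing the constants so that the Taylor recursion closes. Once it closes, the degrees of the coefficients $c_b$ are linear in $t$, and Proposition \ref{holonomic criterion} gives the theorem. The final assertion then follows because $R$ is holonomic, being filtered by $\scr{F}_i\cdot1$, which has dimension $\binom{n+i}{n}$; and holonomic modules have finite length by \cite[Theorem 3.5]{LyubeznikHolonomic}.
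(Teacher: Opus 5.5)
Your proposal is correct. It uses the same machinery as the paper: Proposition~\ref{holonomic criterion}, the Hasse--Schmidt homomorphism $h$ of Remark~\ref{image of Keller is HS}, Taylor expansion, and inversion of $J_\varphi$ in the functoriality relation, closed off by a linear degree bound with a negative offset. The difference is how the induction is organized.

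\textbf{The paper's route.} It splits into two cases.
\begin{itemize}
\item If $b$ is not a power of $p$, it writes $\Phi(\partial_i^{[b]})$ as a product of the $\Phi(\partial_i^{[p^l]})$ and bounds coefficients of products.
\item If $b$ is a power of $p$, it decomposes $\Phi(\partial_i^{[b]})=H+G+L$. The vanishing $\Phi(\partial_i^{[b]})(\varphi_j)=0$ is used only to solve for the first-order part $L$ through $J_\varphi^{-1}$.
\end{itemize}

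\textbf{Your route.} You observe that $h(\varphi_i)=\varphi_i+\delta_{ij}y$ gives the same Jacobian relation at every power $y^s$, namely $J_\varphi u_{\bullet,s}=-\sum_{|\alpha|\ge 2}\partial^{[\alpha]}\varphi\cdot[y^s]u^\alpha$ for all $s\ge 2$. This controls every $u_{k,s}=\Phi(\partial_j^{[s]})(x_k)$ at once, and all coefficients then follow from the $u_{k,s}$. It removes the $p$-adic case split entirely and makes the choice of constants transparent.

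\textbf{One thing to add.} State the base case explicitly. Since $u_{\bullet,1}=J_\varphi^{-1}e_j$ has degree $\le e$, you need $L\ge L_0+e$ in addition to $L_0\ge d+e$; for example, take $L_0=d+e$ and $L=d+2e$.

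\textbf{Two minor remarks.}
\begin{itemize}
\item Your inversion-formula step for recovering $c_b$ is valid but unnecessary. Applying the Taylor expansion to $h(f)$ for arbitrary $f$ gives $c_b=[y^t]\prod_k u_k(y)^{b_k}$ directly, which is how the paper identifies the coefficients.
\item Your stated reason for not reducing to the generators $\partial_j^{[p^l]}$ is mistaken. If $\Phi(\partial_j^{[p^l]})\in\scr{F}_{Cp^l}$ for all $l$ and $t=\sum_l t_lp^l$ in base $p$, then $\Phi(\partial_j^{[t]})\in\scr{F}_{C\sum_l t_lp^l}=\scr{F}_{Ct}$. The digit-wise bounds add up to exactly a linear bound, and this is precisely the paper's non-$p$-power case.
\end{itemize}
Neither remark affects the validity of your argument.
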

\begin{proof}
Let $d:=\max\{\deg(\varphi_1),\dots,\deg(\varphi_n)\}$. We claim that $\Phi(\scr{F}_i)\subseteq \scr{F}_{(2nd+1)i}$ for all $i\geq 1$. Then our theorem will follow from Proposition \ref{holonomic criterion}.

We will start by analyzing $\Phi(\partial^{[b]}_i)$ for all $b\geq 1$ and each $i=1,\dots,n$. (This is one of the major differences between characteristic 0 and characteristic $p$: in characteristic 0, it suffices to consider $\Phi(\partial_i)$ for $i=1,\dots,n$; while in characteristic $p$, one must consider all $\Phi(\partial^{[p^e]}_i)$ for $e\geq 0$ as $\partial^{[p^e]}_i$ ($e\geq 0$) are algebraically independent.) 

Since $\Phi(\partial_i^{[b]})$ (assuming $b\geq 1$) still has order (as a differential operator) $\leq b$, we can write
\begin{equation}
\label{equ: coefficients r}
\Phi(\partial_i^{[b]})=\sum_{\substack{b_1,\dots,b_n\geq 0\\ b_1+\cdots+b_n\leq b}} r_{b_1,\dots,b_n}\partial^{[b_1]}_1\cdots \partial^{[b_n]}_n
\end{equation}
for some $r_{b_1,\dots,b_n}\in R$. As $\partial^{[b_1]}_1\cdots \partial^{[b_n]}_n\in \scr{F}_{\sum_ib_i}\subseteq \scr{F}_b$, it is crucial to bound the degree of $r_{b_1,\dots,b_n}$. One observation is that 
\[r_{0,\dots,0}=0\] 
since $\Phi(\partial^{[b]}_i)(\varphi^{p^e}_i)=0$ for $e\gg 0$ and hence $r_{0,\dots,0}\varphi^{p^e}_i=0$.

To bound the degree of coefficients $r_{b_1,\dots,b_n}$, we would like to have a more explicit description of them. To ease notation, we fix $i$ (the argument for $\Phi(\partial^{[b]}_i)$ works identically for all $\Phi(\partial^{[b]}_j)$) and set
\[\partial_b:=\partial^{[b]}_i\quad {\rm and}\quad \tilde{\partial}_{b}:=\Phi(\partial^{[b]}_i)\quad \forall\ b\geq 1.\]

Since $\{\id_R, \tp,\tp_{2},\dots\}$ is a Hasse-Schmidt derivation (Remark \ref{image of Keller is HS}), the map $h:R\to R[[y]]$ defined by $h(r)=\sum_{t=0}^{\infty}\tp_{t}(r)y^t$ is a $\kk$-algebra morphism. For each $x_j$, we have 
\[h(x_j)=x_j+\tp(x_j)y+\tp_{2}(x_j)y^2+\cdots.\]
Set $z_j(y):=\sum_{t=1}^{\infty}\tp_[{t}(x_j)y^t\in R[[y]]$. Then $h(x_j)=x_j+z_j(y)$. Since $h$ is a $\kk$-algebra morphism, for each $f\in R$, we have
\[h(f)=f(h(x_1),\dots,h(x_n))=f(x_1+z_1(y),\dots,x_n+z_n(y)).\]
Now consider the multi-variable Hasse-Taylor expansion of $f$: 
\[f(x_1+z_1(y),\dots,x_n+z_n(y))=\sum_{b_1,\dots,b_n\geq 0} (\partial^{[b_1]}_1\cdots \partial^{[b_n]}_n)(f)z_1(y)^{b_1}\cdots z_n(y)^{b_n}\]

For each formal power series $g(y)\in R[[y]]$, we will denote by $c_{y^b}(g)$ the coefficient of $y^b$ in $g$. Then it follows that for every $f\in R$
\[\tp_{b}(f)=\sum_{\substack{b_1,\dots,b_n\geq 0\\ b_1+\cdots+b_n\leq b}}c_{y^b}(z_1(y)^{b_1}\cdots z_n(y)^{b_n})(\partial^{[b_1]}_1\cdots \partial^{[b_n]}_n)(f);\]
that is
\begin{equation}
\label{equ: coefficient}
\tp_{b}=\sum_{\substack{b_1,\dots,b_n\geq 0\\ b_1+\cdots+b_n\leq b}}c_{y^b}(z_1(y)^{b_1}\cdots z_n(y)^{b_n})\partial^{[b_1]}_1\cdots \partial^{[b_n]}_n.
\end{equation}
Put it differently, the coefficient $r_{b_1,\dots,b_n}$ in (\ref{equ: coefficients r}) is precisely $c_{y^b}(z_1(y)^{b_1}\cdots z_n(y)^{b_n})$: the coefficient of $y^b$ in $z_1(y)^{b_1}\cdots z_n(y)^{b_n}$. (To clarify, the coefficient $c_{y^b}(z_1(y)^{b_1}\cdots z_n(y)^{b_n})$ is a polynomial in $R$. By $\deg(c_{y^b}(z_1(y)^{b_1}\cdots z_n(y)^{b_n}))$ we mean the degree as defined in $R$. )

{\it Claim.} $\deg(c_{y^b}(z_1(y)^{b_1}\cdots z_n(y)^{b_n}))\leq (2nd)b-n(d-1)$ for each $b\geq 1$.

We will prove this claim by induction on $b$. We start with the case when $b=1$. In this case, the coefficients $c_{y}(z_1(y)^{b_1}\cdots z_n(y)^{b_n})$ (with $\sum_ib_i\leq b=1$) can be derived explicitly from Jacobian matrix $J_{\varphi}$. It follows from $\Phi(\partial_i)(\varphi_j)=\delta_{ij}$ that
\begin{equation}
\label{image of derivatives}
\begin{pmatrix}\Phi(\frac{\partial}{\partial x_1})\\ \vdots \\ \Phi(\frac{\partial}{\partial x_n}) \end{pmatrix}=J^{-1}_{\varphi}\begin{pmatrix}\frac{\partial}{\partial x_1}\\ \vdots \\ \frac{\partial}{\partial x_n}\end{pmatrix}
\end{equation}
where $J^{-1}_{\varphi}$ can be computed as its adjoint matrix divided by $\det(J_{\varphi})$. Since each entry in $J_{\varphi}$ ash degree $\leq d-1$, the entries in the adjoint matrix have degree $\leq (n-1)(d-1)$. Consequently (with $\sum_ib_i\leq b=1$)
\[\deg(c_{y}(z_1(y)^{b_1}\cdots z_n(y)^{b_n}))\leq (n-1)(d-1)\leq 2nd-n(d-1).\]
This proves the claim when $b=1$.

Assume that we have proved our Claim for all $b\leq m$ for an integer $m\geq 1$ and we wish to our claim for $m+1$. 

If $m+1$ is not a power of $p$, write $m+1=\sum_{j=0}^t m_jp^j$ with $0\leq m_j\leq p-1$. Since $m+1$ is not a power of $p$, it follows that $p^j<m+1$ for each $j$ in this expansion. Hence
\[\tp_{m+1}=\Phi(\prod_{j=0}^t (\partial_{p^j})^{m_j})=\prod_{j=0}^t\tp_{p^j}^{m_j}.\] 
Thus the coefficients in $\tp_{m+1})$ is a $\kk$-linear combination of the products of coefficients in $\tp_{p^j})$. Since $p^j<m+1$, by induction the coefficients in $\tp_{p^j}$ satisfy the degree bound as in our Claim. Then one can check that each coefficient in $\tp_{m+1}$ satisfies the degree bound as in our Claim.

Now we consider the case when $m+1=p^e$ for some $e\geq 1$. To this end, we split the right-hand side of (\ref{equ: coefficient}) (when $b=p^e$) into three pieces according to the order of differential operators: 
\begin{enumerate}
\item the differential operators with the highest order $p^e$: 
\[H:=\sum_{\substack{b_1,\dots,b_n\geq 0\\ b_1+\cdots+b_n=p^e}}c_{y^{p^e}}(z_1(y)^{b_1}\cdots z_n(y)^{b_n})\partial^{[b_1]}_1\cdots \partial^{[b_n]}_n\]

\item the differential operators of order between $2$ and $p^e-1$:
\[G:=\sum_{\substack{b_1,\dots,b_n\geq 0\\ 2\leq b_1+\cdots+b_n\leq p^e-1}}c_{y^{p^e}}(z_1(y)^{b_1}\cdots z_n(y)^{b_n})\partial^{[b_1]}_1\cdots \partial^{[b_n]}_n\]

\item the differential operators with the lowest order $1$: 
\[L:=\sum_{\substack{b_1,\dots,b_n\geq 0\\ b_1+\cdots+b_n=1}}c_{y^{p^e}}(z_1(y)^{b_1}\cdots z_n(y)^{b_n})\partial^{[b_1]}_1\cdots \partial^{[b_n]}_n\]

\end{enumerate}
It is clear that $\tp_{p^e}=H+G+L$. Our approaches to $H$, $G$ and $L$ will be different.

We will start with $H$. Recall that 
\[z_i(y)=\tp_{1}(x_i)y+\tp_{2}(x_i)y^2+\cdots+\tp_{j}(x_i)y^j+\cdots=y(\tp_{1}(x_i)+\tp_{2}(x_i)y+\cdots)\]
Hence the coefficient of $y^{p^e}$ in $z_1(y)^{b_1}\cdots z_n(y)^{b_n}$ with $\sum_ib_i=p^e$ is precisely $\tp_{1}(x_1)^{b_1}\cdots \tp_{1}(x_n)^{b_n}$. It follows from the explicit description of $\tp_{1}=\Phi(\partial)$ in (\ref{image of derivatives}) that $\deg(\tp_{1}(x_i))\leq (n-1)(d-1)$ for each $i$. Consequently (with $\sum_ib_i=p^e$)
\begin{align*}
\deg(c_{p^e}(z_1(y)^{b_1}\cdots z_n(y)^{b_n}))&=\deg(\tilde{d}_1(x_1)^{b_1}\cdots \tilde{d}_1(x_n)^{b_n})\\
&\leq \sum_i(n-1)(d-1)b_i=(n-1)(d-1)p^e\leq 2ndp^e-n(d-1)
\end{align*}
This proves the degree bound on $H$.

Next we treat $G$. Since 
\[z_1(y)^{b_1}\cdots z_n(y)^{b_n}=y^{\sum_ib_i}(\tp_{1}(x_1)+\tp_{2}(x_1)y+\cdots)^{b_1}\cdots (\tp_{1}(x_n)+\tp_{2}(x_n)y+\cdots)^{b_n}\]
the coefficient of $y^{p^e}$ in the expansion of $z_1(y)^{b_1}\cdots z_n(y)^{b_n}$ is a $\kk$-linear combination of products of the form 
\[\tp_1(x_1)^{a_{11}}\cdots \tp_{\ell_1}(x_1)^{a_{1\ell_n}}\cdots \tp_1(x_n)^{a_{n1}}\cdots \tp_{\ell_n}(x_n)^{a_{n\ell_n}}\]
with $\sum_{i=1}^n\sum_{j=1}^{\ell_i}a_{ij}j=p^e$. Since $2\leq \sum_ib_i\leq p^e-1$, the indices $\ell_j$ satisfies $\ell_j<p^e$. By induction the degree of the coefficient of $\tp_j$ is $\leq 2ndj-(n-1)(d-1)$ for all $j<p^e$. Hence 
\[\deg(\tp_i(x_j))\leq 2ndi-(n-1)(d-1)\]
 for each $1\leq i\leq \ell_i$ and $i=1,\dots,n$. Consequently
 \begin{align*}
 \deg(\tp_1(x_1)^{a_{11}}\cdots \tp_{\ell_1}(x_1)^{a_{1\ell_1}}\cdots \tp_1(x_n)^{a_{n1}}\cdots \tp_{\ell_n}(x_n)^{a_{n\ell_n}})&\leq 2nd(\sum_ib_i)-n(d-1)(\sum_{i=1}^n\sum_{j=1}^{\ell_i} a_{ij})\\
 &\leq 2ndp^e-n(d-1).
 \end{align*}
This proves that for all indices $(b_1,\dots,b_n)$ with $2\leq \sum_ib_i\leq p^e-1$
\[\deg(c_{p^e}(z_1(y)^{b_1}\cdots z_n(y)^{b_n}))\leq 2ndp^e-n(d-1)\]
 which completes our argument for $G$.
 
 Finally, we consider $L$. Since $L$ corresponds to the multi-induces $(b_1,\dots,b_n)$ with $\sum_ib_i=1$, it follows that
\[\tp_{p^e}(x_j)=(L+G+H)(x_j)=L(x_j)\]
since each differential operator in $G$ and $H$ has order $\geq 2$. Hence
\[L=\tp_{p^e}(x_1)\partial_1+\cdots +\tp_{p^e}(x_n)\partial_n.\]
It remains to prove the bound of degree of $\tp_{p^e}(x_j)$ as in our Claim. 

Since $\partial^{[p^e]}_i(x_j)=0$ for all $i,j=1,\dots,n$, we have $\Phi(\partial^{[p^e]}_i)(\varphi(x_j))=0$ for all $i,j=1,\dots,n$. We specialize this to $\tp_{p^e}=\Phi(\partial^{[p^e]}_i)$ to get $(L+G+H)(\varphi_j)=0$ for each $j=1,\dots,n$. That is, $L(\varphi_j)=-(G+H)(\varphi_j)$ for each $j=1,\dots,n$. Or equivalently, in terms of the Jacobian matrix $J_{\varphi}$, 
\[J_{\varphi}\begin{pmatrix}\tp_{p^e}(x_1)\\ \vdots\\ \tp_{p^e}(x_n) \end{pmatrix}=-\begin{pmatrix}(G+H)\varphi_1\\ \vdots\\ (G+H)\varphi_n \end{pmatrix}\]
Thus,
\[\begin{pmatrix}\tp_{p^e}(x_1)\\ \vdots\\ \tp_{p^e}(x_n) \end{pmatrix}=-J^{-1}_{\varphi}\begin{pmatrix}(G+H)\varphi_1\\ \vdots\\ (G+H)\varphi_n \end{pmatrix}\]
Recall that $d=\max\{\deg(\varphi_1),\dots,\deg(\varphi_n)\}$. Then $\partial^{[b_1]}_1\cdots \partial^{[b_n]}_n(\varphi_j)=0$ whenever $\sum_ib_i>d$. It follows that 
\[(G+H)(\varphi_j)=\left(\sum_{2\leq b_1+\cdots+b_n\leq d}c_{y^{p^e}}(z_1(y)^{b_1}\cdots z_n(y)^{b_n})\partial^{[b_1]}_1\cdots \partial^{[b_n]}_n \right)(\varphi_j)\]
Recall that each entry in $J^{-1}_{\varphi}$ has degree at most $(n-1)(d-1)$. Hence it follows from the argument as for degree bounds on coefficients in $G$ and $H$ that
\[\deg(\tp_{p^e}(x_j))\leq (n-1)(d-1)+2ndp^e-2n(d-1)+(d-\sum_ib_i).\]
Here is why $-2n(d-1)$ appears: if there is a $b_i$ such that $b_i=b\geq 2$, then $\deg(\tp_{p^e}(x_j))\leq (n-1)(d-1)+2ndp^e-b_in(d-1)+(d-\sum_ib_i)$; otherwise, there will be at least two positive $b_i$'s, then each of $z_i(y)^{b_i}$ contributes $-n(d-1)$ to the degree bound. Therefore
\[\deg(\tp_{p^e}(x_j))\leq 2ndp^e-n(d-1).\]
This completes the proof of our claim.

Now we will prove that $\Phi(\scr{F}_b)\subseteq \scr{F}_{(2nd+1)b}$ for all $b\geq 1$ using our Claim.

Observe that since 
\[\Phi(x^{a_1}_1\cdots x^{a_n}_n)=\varphi(x_1)^{a_1}\cdots \varphi(x_n)^{a_n}=\prod_i\varphi_i^{a_i},\] 
it follows that $\Phi(x^{a_1}_1\cdots x^{a_n}_n)\in \scr{F}_{d\sum_ia_i}\subseteq \scr{F}_{(2nd+1)\sum_ia_i}$ for all $a_i\geq 0$ and $i=1,\dots,n$. Hence, to show that $\Phi(\scr{F}_b)\subseteq \scr{F}_{(2nd+1)b}$, it suffices to show $\Phi(\partial^{[b]}_i)\in \scr{F}_{(2nd+1)b}$ for each $1\leq i\leq n$ and all $b\geq 1$ (recall that $\Phi$ is a $\kk$-algebra endomorphism). 

It follows from (\ref{equ: coefficient}) that 
\[\Phi(\partial^{[b]}_i)\in\scr{F}_{(2ndb-n(d-1))+b}\subseteq \scr{F}_{(2nd+1)b}.\]

This finishes the proof of our theorem.
\end{proof}

\section{An extension of Frobenius that does not preserve holonomicity}
\label{counter example}
In characteristic 0, Bavula's celebrated theorem (\cite[Theorem 1.3]{BavulaAnnals}) asserts that every $\kk$-algebra endomorphism of $D$ preserves holonomicity. It is natural ask whether the same conclusion holds in characteristic $p$. In this section, we present an example of an extension of the Frobenius $F:\kk[x]\to \kk[x]$ which does not preserve holonomicity. Hence in characteristic $p$:
\begin{enumerate}
\item if $\varphi\in \End_{\kk}(R)$ is Keller, then it admits a unique extension $\Phi\in \End_{\kk}(D)$ and the restriction of scalar via $\Phi$ preserves holonomicity, or
\item if $\varphi\in \End_{\kk}(R)$ is not Keller, then it may admit an extension $\tv\in \End_{\kk}(D)$ such that the restriction of scalar via $\tv$ does {\it not} preserve holonomicity.
\end{enumerate}

Our example is based on Example \ref{ex: infinite extension of Frob} and inspired by the construction of examples in \cite{KLM12}. 
\begin{example}
\label{example not preserve holo}
Let $R=\FF_p[x]$. For each integer $e\geq 1$, let $\tilde{F}$ and $\alpha_e:D\to D$ be defined as in Example \ref{ex: infinite extension of Frob}. Define $\Phi:D\to D$ as 
\[\Psi:=(\prod_{e\geq 0}\alpha_e)\circ \tilde{F}.\]
As shown in (\ref{alpha on small t}) that for each $\theta\in D$ there exists $e_0$ such that $\Psi(\theta)=(\prod_{e=0}^{e_0}\alpha_e)\circ \tilde{F}(\theta)$. Hence $\Psi(\theta)$ is well-defined for each $\theta\in D$ and one can check that $\Psi$ is a $\FF_p$-algebra endomorphism on $D$.

{\it Claim.} $\prescript{\Psi}{}R$ is not holonomic.

\begin{proof}
Assume otherwise and we will deduce a contradiction. Since $\prescript{\Psi}{}R$ is assumed to be holonomic, it must be cyclic (Remark \ref{holonomic is cyclic}). Let $g\in R$ be a generator. It follows from Lemma \ref{filtration by generator} that
\[\limsup_{i\to \infty}\frac{\dim_{\FF_p}(\scr{F}_i\cdot g)}{i}=\limsup_{i\to \infty}\frac{\dim_{\FF_p}(\Psi(\scr{F}_i)(g))}{i}<\infty.\]

Our goal is to show that there is an infinite sequence of integers $\{i_e\}$ such that 
\[\limsup_{i_e\to \infty}\frac{\dim_{\FF_p}(\scr{F}_{i_e}\cdot g)}{i_e}\]
is unbounded and hence provide the desired contradiction.

Since $g$ is a generator, there exists a $\theta\in D$ such that $\Psi(\theta)(g)=1$. Fix an integer $\ell$ such that $\theta\in \scr{F}_{\ell}$. We claim that we can take the sequence $\{i_e:=2p^e+\ell\}$.

To this end, for each $t\leq e$, we consider $\Psi(\scr{F}_{2p^t})(1)\subseteq \Psi(\scr{F}_{2p^t+\ell})(g)$ since $1\in \Psi(\scr{F}_{\ell})(g)$. For each $0\leq s\leq p^t$, it is clear that $x^s\partial^{[p^t]}\in \scr{F}_{2p^t}\subseteq \scr{F}_{2p^e}$. One can check that $\Psi(x^s\partial^{[p^t]})(1)$ is a polynomial with degree $p^{4t+2}+s+1$. Thus, $\{\Psi(x^s\partial^{[p^t]})(1)\mid 0\leq s\leq p^t\}$ is a set of polynomials with distinct degrees; consequently it is an $\FF_p$-linearly independent set for each $t\leq e$. Moreover, if $t_1<t_2$, then $p^{4t_1+2}+p^{t_1}+1<p^{4t_2+2}$. Hence the degrees of polynomials in $\{\Psi(x^s\partial^{[p^t_1]})(1)\mid 0\leq s\leq p^{t_1}\}$ are strictly less than the degrees of those in $\{\Psi(x^s\partial^{[p^{t_2}]})(1)\mid 0\leq s\leq p^{t_2}\}$. Therefore, 
\[\{\Psi(x^s\partial^{[p^t]})(1)\mid 0\leq s\leq p^t,\ t\leq e\}\]
is an $\FF_p$-linearly independent set of polynomials. One can check that
\[\dim_{\FF_p}(\{\Psi(x^s\partial^{[p^t]})(1)\mid 0\leq s\leq p^t,\ t\leq e\})\sim O(ep^e)\]
Therefore, 
\[\limsup_{e\to\infty}\frac{\dim_{\FF_p}(\{\Psi(x^s\partial^{[p^t]})(1)\mid 0\leq s\leq p^t,\ t\leq e\})}{2p^e}\]
is unbounded. Consequence
\[\limsup_{e\to\infty}\frac{\dim_{\FF_p}(\Psi(\scr{F}_{2p^e+\ell}))}{2p^e+\ell}\geq \limsup_{e\to \infty}\frac{\dim_{\FF_p}(\{\Psi(x^s\partial^{[p^t]})(1)\mid 0\leq s\leq p^t,\ t\leq e\})}{2p^e+\ell}\]
is unbounded (note that $\ell$ is a fixed constant and independent of $e$). This completes the proof of our Claim.
\end{proof}
\end{example}

\section*{Acknowledgements} The author is grateful to Gennady Lyubeznik for discussions on $D$-modules and for his support over the years. The author thanks Vladimir V. Bavula for his valuable comments on a preliminary version of this article which improved the exposition.

\end{document}